\newcommand{\Rmnum}[1]{\expandafter\@slowromancap\romannumeral #1@}
\newcommand{\Ext}{\operatorname{Ext}}
\newcommand{\Hom}{\operatorname{Hom}}
\newcommand{\ch}{\operatorname{ch}}
\newcommand{\rank}{\mathrm{rank}}
\newcommand{\lera}{\leftrightarrow}
\newcommand{\frb}{\mathfrak{b}}
\newcommand{\frg}{\mathfrak{g}}
\newcommand{\frh}{\mathfrak{h}}
\newcommand{\frl}{\mathfrak{l}}
\newcommand{\frp}{\mathfrak{p}}
\newcommand{\fru}{\mathfrak{u}}
\newcommand{\frsl}{\mathfrak{sl}}
\newcommand{\bbC}{\mathbb{C}}
\newcommand{\bbQ}{\mathbb{Q}}
\newcommand{\bbZ}{\mathbb{Z}}
\newcommand{\caO}{\mathcal{O}}
\newtheorem{theorem}[equation]{Theorem}
\newtheorem{prop}[equation]{Proposition}
\newtheorem{lemma}[equation]{Lemma}
\theoremstyle{remark}
\newtheorem{remark}[equation]{Remark}
\theoremstyle{definition}
\newtheorem{definition}[equation]{Definition}
\numberwithin{equation}{section}
\begin{document}

\title[blocks of category $\caO^\frp$]{Blocks of the category $\caO^\frp$ in type $E$}

\author{Jieren Hu}
\address[Hu]{College of Computer Science and Software Engineering, Shenzhen University,
Shenzhen, 518060, Guangdong, P. R. China}
\email{hujieren97@\,qq.com}

\author{Wei Xiao}
\address[Xiao]{College of Mathematics and statistics, Shenzhen Key Laboratory of Advanced Machine Learning and Applications, Shenzhen University,
Shenzhen, 518060, Guangdong, P. R. China}
\email{xiaow@szu.edu.cn}

\author{Ailin Zhang}
\address[Zhang]{College of Mathematics and statistics, Shenzhen Key Laboratory of Advanced Machine Learning and Applications, Shenzhen University,
Shenzhen, 518060, Guangdong, P. R. China}
\email{az304@szu.edu.cn}

\thanks{The second author is supported by the National Science Foundation of China (Grant No. 11701381) and Guangdong Natural Science Foundation (Grant No. 2017A030310138). The third author is supported by the National Science Foundation of China (Grant No. 11504246).}

\subjclass[2010]{17B10, 22E47}

\keywords{category $\caO^\frp$, block, reduction process, Jantzen coefficient}


\bigskip

\begin{abstract}
In this paper, we determine the blocks of $\caO^\frp$ associated with semisimple Lie algebras of type $E$.
\end{abstract}

%

\maketitle

%
%
\section{Introduction}
%
%

Suppose that $\frg$ is a complex semisimple Lie algebra with a Borel subalgebra $\frb$ and a Cartan subalgebra $\frh\subset\frb$. Let $\frp\supset\frb$ be a standard parabolic subalgebra of $\frg$. If two simple modules of $\caO^\frp$ \cite{R, H2} extend nontrivially, they belong to the same block of $\caO^\frp$ \cite{H2}. The equivalence relation generated by this relation partitions the simple modules into blocks. The category $\caO^\frp$ can be decomposed as follows:
\[
\caO^\frp=\bigoplus_{\chi}\caO_{\chi}^\frp,
\]
where $\chi=\chi_\lambda$ for some $\lambda\in\frh^*$ is the infinitesimal character of the full subcategory $\caO_\lambda^\frp:=\caO_{\chi_\lambda}^\frp$. If $\frp=\frb$ or $\lambda$ is regular, then $\caO_\lambda^\frp$ is a block \cite{H2}. Brundan shows that this also holds when the root systems of irreducible components of $\frg$ is of type $A$ \cite{Br}. When $\Phi$ is not of type $A$, there are examples such that $\caO_\lambda^\frp$ is a direct sum of more than one block \cite{ES, BN, P1, X2} (In some paper, the category $\caO_\lambda^\frp$ is called a ``block'' of $\caO^\frp$ and a block is called a ``linkage class'' \cite{P1}). This makes the problem of blocks to be quite subtle.

Theoretically, the extensions between simple modules depend on the leading coefficients (the $\mu$-function) of Kazhdan-Lusztig polynomials \cite{KL}, which can be calculated by a recursion process. Along this line, lower rank cases $F_4$ and $G_2$ was solved using computer programs \cite{P1}. When the rank of $\frg$ is large, the computation of $\mu$-functions turns out to be very difficult \cite{Lu, Xi, LX} since the recursion formulae quickly become unusable. In \cite{X2}, the second authors show that Jantzen coefficients \cite{XZ} also determine the blocks. This Jantzen coefficients (which is relatively easy to calculate) come from the well-known Jantzen filtration \cite{J1, J2} for standard modules. They can be used to obtain the radical filtrations of generalized Verma modules in many cases \cite{HX}. Applying the theory of Jantzen coefficients, we determine the block decomposition of $\caO^\frp$ in this paper for semisimple algebras of type $E$.

The problem of blocks has a deep relation with some other problems about $\caO^\frp$, including simplicity of generalized Verma modules \cite{J2, He, HKZ, BX} as well as homomorphism between generalized Verma modules \cite{Bo, BC, BEJ, BN, L1, L2, M1, M2, M3, X1} and representation types of blocks of $\caO^\frp$ \cite{BN, P2}.

This paper is organized as follows. In section 2, we provide necessary notations and definitions. The relation between Jantzen coefficients and blocks is discovered in section 3. In section 4, we give the algorithm to compute blocks. An example is described in section 5 to illustrate our algorithm. The full data about blocks of type $E$ is presented in section 6.

%
%
\section{Notations and definitions}
%
%

Denote by $\Phi\subset\frh^*$ the root system of $(\frg, \frh)$ with a positive system $\Phi^+$ and a simple system $\Delta\subset\Phi^+$ corresponding to $\frb$.
Let $\Phi_I$ be a subsystem of $\Phi$ associated with $I\subset\Delta$. Let $W$ (resp. $W_I$) be the Weyl group of $\Phi$ (resp. $\Phi_I$). Denote by $l(\cdot)$ the length function on $W$. For $\alpha\in\Phi$ and $\lambda\in\frh^*$, write $s_\alpha\lambda=\lambda-\langle\lambda, \alpha^\vee\rangle\alpha$, where $\langle\cdot, \cdot\rangle$ is the bilinear form on $\frh^*$ induced from the Killing form and $\alpha^\vee:=2\alpha/\langle\alpha, \alpha\rangle$ is the coroot of $\alpha$. The weight $\lambda\in\frh^*$ is called \emph{regular} if $\langle\lambda, \alpha^\vee\rangle\neq0$ for all roots $\alpha\in\Phi$. We say $\lambda$ is \emph{integral} if $\langle\lambda, \alpha^\vee\rangle\in\bbZ$ for all $\alpha\in\Phi$. An integral weight $\lambda\in\frh^*$ is \emph{dominant} if $\langle\lambda, \alpha^\vee\rangle\in\bbZ^{\geq0}$ (resp. $\langle\lambda, \alpha^\vee\rangle\in\bbZ^{\leq0}$) for all $\alpha\in\Delta$. If $\lambda$ is integral, there exists a unique dominant weight $\overline\lambda$ in the orbit $W\lambda$.

Let $\frp$ be the standard parabolic subalgebra corresponding to $I$. Then $\frp=\frl\oplus\fru$, where $\frl$ is a Levi subalgebra and $\fru$ is a nilpotent radical of $\frg$. Set
\[
\Lambda_I^+:=\{\lambda\in\frh^*\ |\ \langle\lambda, \alpha^\vee\rangle\in\bbZ^{>0}\ \mbox{for all}\ \alpha\in I\}.
\]
For $\lambda\in\Lambda_I^+$, let $F(\lambda-\rho)$ be a finite dimensional simple $\frl$-modules of highest weight $\lambda-\rho$, where $\rho:=\frac{1}{2}\sum_{\alpha\in\Phi^+}\alpha$. The generalized Verma module is defined by
\[
M_I(\lambda):=U(\frg)\otimes_{U(\frp)}F(\lambda-\rho).
\]
Here $F(\lambda-\rho)$ is viewed as a $\frp$-module with trivial $\fru$-action. The generalized Verma module $M_I(\lambda)$ and its simple quotients $L(\lambda)$ share the same infinitesimal character $\chi_\lambda$, where $\chi_\lambda$ is an algebra homomorphism from the center $Z(\frg)$ of $U(\frg)$ to $\bbC$ so that $z\cdot v=\chi_\lambda(z)v$ for all $z\in Z(\frg)$ and all $v\in M_I(\lambda)$. If $I=\emptyset$, then $\frp=\frb$ and $\caO^\frp$ collapses to the well known BGG category $\caO$ \cite{BGG}. Moreover, $M(\lambda):=M_\emptyset(\lambda)$ is the Verma module with highest weight $\lambda-\rho$. Denote by $\ch M$ the formal character of module $M\in\caO$. The module $M$ has a composition series with simple quotients isomorphic to some $L(\lambda)$. Denote by $[M : L(\lambda)]$ the multiplicity of $L(\lambda)$.

For $\lambda\in\frh^*$, set
\[
\Phi_\lambda:=\{\beta\in\Phi\mid\langle\lambda,\beta\rangle=0\}.
\]
It is obvious that $\Phi_\lambda$ is a subsystem of $\Phi$. In particular, if $\lambda$ is integral, then $\Phi_\lambda=w\Phi_J$ for some $w\in W$, where $J=\{\alpha\in\Delta\mid\langle\overline\lambda, \alpha\rangle=0\}$. Define
\[
{}^IW^J=\{w\in W\mid \ell(xwy)=\ell(x)+\ell(w)+\ell(y)\ \mbox{for any}\ x\in W_I, y\in W_J\},
\]
Any simple module of $\caO_\lambda^\frp$ can be written as $L(w\overline\lambda)$ for $w\in{}^IW^J$.

%
%
\section{Jantzen coefficients and blocks}
%
%

In this section, we will discuss the relation between Jantzen coefficients and blocks of category $\caO^\frp$. Some results here are already obtained in \cite{X2}, we write down the full proof for self containment.

\subsection{Jantzen filtration and Jantzen coefficients} For $\lambda\in\Lambda_I^+$, we have the following result (see Proposition 9.6 in \cite{H2}):
\begin{equation}\label{jfeq1}
\ch M_I(\lambda)=\sum_{w\in W_I}(-1)^{\ell(w)}\ch M(w\lambda),
\end{equation}
The right side of (\ref{jfeq1}), which we denoted by $\theta(\lambda)$, is a valid character formula for arbitrary $\lambda\in\frh^*$. Denote
\[
\begin{aligned}
\Psi_\lambda^+=&\{\beta\in\Phi^+\backslash\Phi_I\mid \langle\lambda, \beta^\vee\rangle\in\bbZ^{>0}\},\\
\Psi_\lambda^{++}=&\{\beta\in\Psi_\lambda^+\mid \langle s_\beta\lambda, \alpha\rangle\neq0\ \mbox{for all}\ \alpha\in\Phi_I\}.
\end{aligned}
\]

The Jantzen filtration is a useful tool in the study of Verma modules\cite{J1, H2}. With Lemma 3, Satz 2 and the observation in the Bemerkung before Lemma 4 in \cite{J2}, along Jantzen's line for Verma modules (see \cite{J1} or \cite{H2}), one can obtain the following result:

\begin{theorem}[Jantzen filtration and sum formula for generalized Verma modules]\label{jfthm1}
Let $\lambda\in\Lambda_I^+$, then $M_I(\lambda)$ has a filtration by submodules
\[
M_I(\lambda)=M_I(\lambda)^0\supset M_I(\lambda)^1\supset M_I(\lambda)^2\supset\ldots
\]
with $M_I(\lambda)^i=0$ for large $i$, such that
\begin{itemize}
\item [(1)] Every nonzero quotient $M_I(\lambda)^i/M_I(\lambda)^{i+1}$ has a nondegenerate contravariant form.

\item [(2)] $M_I(\lambda)^1$ is the unique maximal submodule of $M_I(\lambda)$.

\item [(3)] There is a formula:
\begin{equation}\label{jft1eq1}
\sum_{i>0}\ch M_I(\lambda)^i=\sum_{\beta\in\Psi_\lambda^+}\theta(s_\beta\lambda).
\end{equation}
\end{itemize}
\end{theorem}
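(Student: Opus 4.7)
The plan is to adapt Jantzen's original construction for Verma modules (\cite{J1}, see also Chapter~5 of \cite{H2}) to the parabolic setting. Fix a generic $\lambda_0\in\frh^*$ and work over the DVR $A=\bbC[[t]]$ with fraction field $K$. For $\tilde\lambda:=\lambda+t\lambda_0$, the deformation $F(\tilde\lambda-\rho)_K$ of the finite dimensional $\frl$-module is well defined and carries a nondegenerate contravariant form; choosing an $A$-lattice $F_A$ specialising at $t=0$ to $F(\lambda-\rho)$ yields an $A$-form $M_I(\tilde\lambda)_A:=U(\frg)\otimes_{U(\frp)}F_A$ of the generalized Verma module. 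This is $A$-free and inherits a contravariant $A$-bilinear form $\langle\cdot,\cdot\rangle_A$ which becomes nondegenerate after extension to $K$.

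Define the filtration
\[
M_I(\lambda)^i:=\{v\in M_I(\tilde\lambda)_A\mid\langle v, M_I(\tilde\lambda)_A\rangle_A\subseteq t^iA\}/tM_I(\tilde\lambda)_A,
\]
viewed inside $M_I(\lambda)=M_I(\tilde\lambda)_A\otimes_A\bbC$. Parts (1) and (2) are then formal consequences of nondegeneracy of the form over $K$, exactly as in the Verma case: the induced form on each quotient $M_I(\lambda)^i/M_I(\lambda)^{i+1}$ is nondegenerate and contravariant; $M_I(\lambda)^1$ is the radical of the form on $M_I(\lambda)$, which coincides with the unique maximal submodule; and the filtration terminates because the weight spaces are finite dimensional.

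For the sum formula (3), weight-space-wise one has $\sum_{i>0}\ch M_I(\lambda)^i=\sum_\mu\nu_t(\det\langle\cdot,\cdot\rangle_A|_\mu)\,e^\mu$, where $\nu_t$ denotes $t$-adic valuation. Using (\ref{jfeq1}), the finite dimensional $\frl$-module $F(\tilde\lambda-\rho)$ is expressed as a $W_I$-alternating sum of one-dimensional weight modules, so the weight-$\mu$ component of $M_I(\tilde\lambda)_A$ fits, over $K$, into an Euler-type identity among weight spaces of ordinary Verma modules $M(w\tilde\lambda)$ for $w\in W_I$. Combined with Jantzen's determinant formula on $M(\tilde\lambda)$ (Satz~2 of \cite{J2}), taking $t$-adic valuations yields
\[
\sum_{i>0}\ch M_I(\lambda)^i=\sum_{\beta\in\Phi^+,\,\langle\lambda,\beta^\vee\rangle\in\bbZ^{>0}}\theta(s_\beta\lambda).
\]
Finally, for $\beta\in\Phi_I^+$ one has $s_\beta\in W_I$; re-indexing the expansion (\ref{jfeq1}) of $\theta(s_\beta\lambda)$ by $w\mapsto ws_\beta$ flips all signs and forces $\theta(s_\beta\lambda)=0$. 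This reduces the sum to $\beta\in\Phi^+\setminus\Phi_I$ with $\langle\lambda,\beta^\vee\rangle\in\bbZ^{>0}$, i.e.\ $\beta\in\Psi_\lambda^+$, giving (\ref{jft1eq1}).

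The main obstacle is the determinant identity used in the third paragraph: one must verify that the $t$-adic valuation of $\det\langle\cdot,\cdot\rangle_A$ on each weight space of $M_I(\tilde\lambda)_A$ comes out to the parabolic product $\prod_{\beta\in\Phi^+\setminus\Phi_I}$ of Shapovalov-type factors (with no spurious contribution from $\Phi_I^+$). This is precisely the content of the Bemerkung preceding Lemma~4 of \cite{J2}: when one passes from the ordinary Shapovalov determinant on $M(\tilde\lambda)$ to its image on $M_I(\tilde\lambda)_A$, the $W_I$-alternating nature of the finite dimensional quotient $F(\tilde\lambda-\rho)$ telescopes away all factors attached to roots in $\Phi_I^+$. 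Once this cancellation is made rigorous, combining it with Jantzen's original formula for Verma modules produces the right-hand side of (\ref{jft1eq1}) directly.
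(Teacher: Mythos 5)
Your overall strategy is the one the paper intends: the paper gives no independent argument but derives the theorem from Jantzen \cite{J2} (Lemma 3, Satz 2 and the Bemerkung before Lemma 4), run along Jantzen's construction for Verma modules, which is exactly the deformation-over-$\bbC[[t]]$ framework you set up. However, two points in your write-up are genuinely wrong as stated. First, the deformation direction: for a \emph{generic} $\lambda_0\in\frh^*$ the object $F(\tilde\lambda-\rho)$ does not exist, since $\langle\tilde\lambda,\alpha^\vee\rangle$ is no longer a positive integer for $\alpha\in I$. One must take $\lambda_0$ with $\langle\lambda_0,\alpha^\vee\rangle=0$ for all $\alpha\in I$ (generic only transverse to $\Phi_I$), so that the underlying $[\frl,\frl]$-module is unchanged and only the central character of $\frl$ deforms. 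This is not a cosmetic fix: that orthogonality is precisely what makes the contravariant form on $F_A$ nondegenerate over $A$ and hence makes every Shapovalov-type factor attached to a root of $\Phi_I^+$ a unit in $A$ -- which is the actual mechanism by which $\Phi_I$ drops out of the sum formula.

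Second, the reduction at the end of your third paragraph is false. For $\beta\in\Phi_I^+$ the reindexing $w\mapsto ws_\beta$ does not kill $\theta(s_\beta\lambda)$: since $(-1)^{\ell(ws_\beta)}=-(-1)^{\ell(w)}$, it gives $\theta(s_\beta\lambda)=-\theta(\lambda)=-\ch M_I(\lambda)\neq0$ by (\ref{jfeq1}). So if your intermediate identity (sum over \emph{all} $\beta\in\Phi^+$ with $\langle\lambda,\beta^\vee\rangle\in\bbZ^{>0}$) were correct, the right-hand side would differ from (\ref{jft1eq1}) by $-|\Phi_I^+|\,\ch M_I(\lambda)$, which is absurd since the left-hand side is a sum of characters of submodules. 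In fact that intermediate identity is itself unjustified: $t$-adic valuations of determinants are not additive along a character-level Euler identity relating the weight spaces of $M_I(\tilde\lambda)_A$ to those of the Verma modules $M(w\tilde\lambda)$, $w\in W_I$, so you cannot obtain the parabolic determinant by alternating the ordinary Shapovalov determinants. The correct route -- and the content of Lemma 3, Satz 2 and the Bemerkung in \cite{J2} that the paper invokes -- is a direct computation of $\det\langle\cdot,\cdot\rangle_A$ on each weight space of the induced module, in which only roots $\beta\in\Phi^+\setminus\Phi_I$ contribute non-unit factors; taking valuations then yields (\ref{jft1eq1}) with the sum already running over $\Psi_\lambda^+$, and no cancellation step of the kind you propose is needed (or available). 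Parts (1) and (2) of your argument are fine once the lattice $F_A$ is correctly defined.
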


Now recall the Jantzen coefficients defined in \cite{XZ}. The right side of (\ref{jft1eq1}) can be written as
\begin{equation}\label{jfeq2}
\sum_{\beta\in\Psi_\lambda^+}\theta(s_\beta\lambda)=\sum_{\beta\in\Psi_\lambda^{++}}\theta(s_\beta\lambda)=\sum_{\nu\in\Lambda_I^+}c(\lambda, \nu)\ch M_I(\nu),
\end{equation}
where $c(\lambda, \nu)$ are called the \emph{Jantzen coefficients} associated with $(\Phi_I, \Phi)$. Evidently $c(\lambda, \nu)$ are nonzero for only finitely many $\nu\in\Lambda_I^+$.  Write $\mu\leq\lambda$ when $\Hom_\caO(M(\mu), M(\lambda))\neq0$ for $\lambda, \mu\in\frh^*$. If $\lambda, \mu\in\Lambda_I^+$ and $\Ext^1_{\caO^\frp}(L(\mu), L(\lambda))\neq0$, then either $\mu<\lambda$ or $\lambda<\mu$. If $c(\lambda, \mu)=0$, then $\lambda>\mu$.

\begin{definition}\label{jfdef1}
Fix $I\subset\Delta$. Let $\lambda, \mu\in\Lambda_I^+$. We write $\lambda\succeq\mu$ if there exist $\lambda^0, \lambda^1, \ldots, \lambda^{k}\in\Lambda_I^+$ $(k\geq0)$ so that $\lambda^0=\lambda$, $\lambda^{k}=\mu$ and $c(\lambda^{i-1}, \lambda^i)\neq0$ for $1\leq i\leq k$. This induces an ordering on $\Lambda_I^+$. In particular, if $\lambda\succ\mu$ and there exists no $\lambda>\nu>\mu$ with $\lambda\succ\nu\succ\mu$, we say $\lambda$ is \emph{adjacent to} $\mu$.
\end{definition}


\begin{lemma}\label{jflem1}
Let $\lambda, \mu\in\Lambda_I^+$.
\begin{itemize}
\item [(1)] If $[M_I(\lambda), L(\mu)]>0$, then $\lambda\succeq\mu$.
\item [(2)] If $\lambda$ is adjacent to $\mu$, then $[M_I(\lambda), L(\mu)]>0$.
\end{itemize}
\end{lemma}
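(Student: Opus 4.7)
My plan is to compare, on the two sides of the Jantzen sum formula (\ref{jft1eq1})--(\ref{jfeq2}), the coefficient of $\ch L(\mu)$, and to exploit non-negativity of the left-hand side to pin down the relevant Jantzen coefficients.

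For part (1) I will argue by induction on $\lambda-\mu$ with respect to the partial order on $\Lambda_I^+$, which is well-founded since only finitely many elements lie in the interval $[\mu,\lambda]$. The case $\lambda=\mu$ is trivial, so assume $\mu<\lambda$. Because $M_I(\lambda)^1$ is the unique maximal submodule of $M_I(\lambda)$ by Theorem~\ref{jfthm1}(2), every composition factor distinct from the head $L(\lambda)$ sits inside $M_I(\lambda)^1$; in particular the coefficient of $\ch L(\mu)$ in $\sum_{i\ge1}\ch M_I(\lambda)^i$ equals $[M_I(\lambda):L(\mu)]>0$. Matching this with (\ref{jfeq2}) shows that $\sum_{\nu}c(\lambda,\nu)[M_I(\nu):L(\mu)]>0$, so there exists $\nu\in\Lambda_I^+$ with $c(\lambda,\nu)\ne 0$ and $[M_I(\nu):L(\mu)]>0$. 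Such a $\nu$ satisfies $\nu<\lambda$ by the observation following (\ref{jfeq2}), and the induction hypothesis applied to $(\nu,\mu)$ gives $\nu\succeq\mu$; concatenating with the one-step relation $\lambda\succ\nu$ yields $\lambda\succeq\mu$.

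For part (2) I first note that adjacency forces $c(\lambda,\mu)\ne 0$: any chain witnessing $\lambda\succ\mu$ of length $k\ge 2$ exhibits $\lambda^1$ strictly between $\lambda$ and $\mu$ with $\lambda\succ\lambda^1\succ\mu$, contradicting adjacency. Next I re-examine the coefficient of $\ch L(\mu)$ on the right of (\ref{jfeq2}): for any $\nu\ne\mu$ to contribute, one needs $c(\lambda,\nu)\ne 0$ and $[M_I(\nu):L(\mu)]>0$, which by part~(1) forces $\lambda\succ\nu\succ\mu$ with $\lambda>\nu>\mu$, again contradicting adjacency. Hence only the $\nu=\mu$ term survives on the right, and the sum formula collapses to
\[
\sum_{i\ge 1}\bigl[M_I(\lambda)^i:L(\mu)\bigr]=c(\lambda,\mu).
\]
The left-hand side is non-negative while the right-hand side is nonzero, so both must be strictly positive, and in particular $[M_I(\lambda):L(\mu)]>0$.

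The main obstacle is that Jantzen coefficients can a priori be negative, so the bare existence of a nonzero $c(\lambda,\nu)$ does not by itself witness a genuine composition factor. Part~(2) circumvents this by using adjacency to whittle the right-hand side of the sum formula down to the single term $c(\lambda,\mu)$, after which non-negativity of composition multiplicities on the left pins the sign. A secondary technicality is the well-foundedness of the induction in part~(1), which is handled by using the partial order on $\Lambda_I^+$ (equivalently, the height of $\lambda-\mu$) and the fact that $c(\lambda,\nu)\ne 0$ forces $\nu<\lambda$.
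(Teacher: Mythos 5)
Your proposal is correct and takes essentially the same route as the paper: both parts hinge on the sum formula (\ref{jfl1eq1}), the fact that $c(\lambda,\nu)\neq 0$ forces $\nu<\lambda$ (your descent is phrased as an explicit induction where the paper iterates the argument), and, for adjacency, the collapse of the right-hand side to the single term $c(\lambda,\mu)$ combined with non-negativity of composition multiplicities. The only nitpick is that the coefficient of $\ch L(\mu)$ in $\sum_{i\geq1}\ch M_I(\lambda)^i$ is $\sum_{i\geq1}[M_I(\lambda)^i:L(\mu)]\geq [M_I(\lambda):L(\mu)]$ rather than equal to it, but since you only use its positivity this is harmless.
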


\begin{proof}
(1) The case $\lambda=\mu$ is trivial. Assume that $[M_I(\lambda), L(\mu)]>0$ and $\mu<\lambda$. We obtain
\begin{equation}\label{jfl1eq1}
\sum_{i>0}[M_I(\lambda)^i : L(\mu)]=\sum_{\nu\in\Lambda_I^+}c(\lambda, \nu)[M_I(\nu) : L(\mu)],
\end{equation}
in view of (\ref{jft1eq1}) and (\ref{jfeq2}). The left side of (\ref{jfl1eq1}) is nonzero. If $c(\lambda, \mu)\neq0$, then $\lambda\succ\mu$. If $c(\lambda, \mu)=0$, there exists $\lambda>\lambda^1>\mu$ such that $c(\lambda, \lambda^1)\neq0$ and $[M_I(\lambda^1) : L(\mu)]\neq0$. Thus $\lambda\succ\lambda^1$. We get (1) when $\lambda^1\succ \mu$, otherwise $c(\lambda^1, \mu)=0$. With $\lambda$ replaced by $\lambda^1$ in the previous argument, there exists $\lambda^1\succ \lambda^2>\mu$ so that $c(\lambda^1, \lambda^2)[M_I(\lambda^2) : L(\mu)]\neq0$. In this spirit, we can eventually get $\lambda\succ\lambda^1\succ\ldots\succ \lambda^{k-1}\succ\mu$ for some $k\geq1$.

(2) Suppose that $\lambda$ is adjacent to $\mu$. Then $c(\lambda, \mu)\neq0$ by definition. If $c(\lambda, \nu)[M_I(\nu) : L(\mu)]\neq0$ for some $\nu\in\Lambda_I^+$, the argument of the first part shows that $\lambda\succ\nu\succeq \mu$. The adjacent condition implies $\nu=\lambda$. So $c(\lambda, \nu)[M_I(\nu) : L(\mu)]=0$ unless $\nu=\mu$. In view of (\ref{jfl1eq1}), one has
\begin{equation}\label{jfl3eq2}
\sum_{i>0}[M_I(\lambda)^i : L(\mu)]=c(\lambda, \mu)\neq0.
\end{equation}
Hence $[M_I(\lambda) : L(\mu)]>0$.
\end{proof}

\subsection{Blocks and lined roots}

\begin{definition}\label{jfdef3}
Fix $I\subset\Delta$. For $\lambda, \mu\in\Lambda_I^+$, write $\lambda\lera\mu$ if $\Ext^1_{\caO^\frp}(L(\mu), L(\lambda))\neq0$. In general, write $\lambda\lera\mu$ if $\lambda=\mu$ or there exist $\lambda^0, \lambda^1, \ldots, \lambda^r\in\Lambda_I^+$ such that
\[
\lambda=\lambda^0\leftrightarrow\lambda^1\leftrightarrow\ldots\leftrightarrow\lambda^r=\mu,
\]
we say $\lambda, \mu\in\Lambda_I^+$ are {\it $\Ext^1$-connected relative to $(\Phi_I, \Phi)$}. For convenience, if $\lambda\lera\mu$ relative to $(\Phi_I, \Phi)$, we write $w_1\lambda\lera w_2\mu$ for any $w_1, w_2\in W_I$.
\end{definition}

The definition gives an equivalence relation on all the $\Phi_I$-regular weights such that each $W_I$ orbit is contained in one equivalence class. The following result is well known.

\begin{lemma}\label{jflem2}
Let $\lambda, \mu\in\Lambda_I^+$ with $\mu<\lambda$. If $\Ext^1_{\caO^\frp}(L(\mu), L(\lambda))\neq0$, then $[M_I(\lambda), L(\mu)]>0$. Conversely, if $[M_I(\lambda), L(\mu)]>0$, then $\lambda\lera\mu$.
\end{lemma}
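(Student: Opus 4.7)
The plan is to handle the two implications separately, using the standard duality on $\caO^\frp$ for the forward direction and the indecomposability of $M_I(\lambda)$ for the converse.

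For the first implication, I would represent a nonzero class in $\Ext^1_{\caO^\frp}(L(\mu), L(\lambda))$ by a non-split short exact sequence $0 \to L(\lambda) \to E \to L(\mu) \to 0$ and apply the duality $d$ on $\caO^\frp$, which preserves every simple module. This yields a non-split sequence $0 \to L(\mu) \to d(E) \to L(\lambda) \to 0$, so $L(\lambda)$ is the unique simple quotient of $d(E)$. Since $\mu < \lambda$, the weight $\lambda - \rho$ does not occur in $L(\mu)$, so the $(\lambda-\rho)$-weight space of $d(E)$ projects isomorphically to that of $L(\lambda)$, inherits the $\frl$-module structure $F(\lambda-\rho)$, and is annihilated by $\fru$. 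The universal property of $M_I(\lambda)$ then produces a nonzero homomorphism $\phi : M_I(\lambda) \to d(E)$ whose composition with $d(E) \to L(\lambda)$ is the canonical surjection $M_I(\lambda) \to L(\lambda)$. The only proper nonzero submodule of $d(E)$ is $L(\mu)$; since $\im \phi$ must surject onto $L(\lambda)$ and $L(\mu)$ cannot, this forces $\im \phi = d(E)$. Hence $L(\mu) \subset d(E)$ is a composition factor of $M_I(\lambda)$, giving $[M_I(\lambda) : L(\mu)] > 0$.

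For the converse, by Theorem~\ref{jfthm1}(2) the module $M_I(\lambda)$ has a unique maximal submodule $M_I(\lambda)^1$, hence a unique simple quotient $L(\lambda)$, and is therefore indecomposable in $\caO^\frp$. The block decomposition of $\caO^\frp$ splits every object, so any indecomposable module sits inside a single block. All composition factors of $M_I(\lambda)$ therefore belong to a common block; under the hypothesis $[M_I(\lambda) : L(\mu)] > 0$ this includes $L(\mu)$ alongside the top $L(\lambda)$, which by Definition~\ref{jfdef3} is exactly the statement $\lambda \lera \mu$.

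The main subtlety lies in the weight-space bookkeeping and the identification of submodules of $d(E)$ in the first paragraph: one needs the parabolic duality to behave as in the classical $\caO$ setting, and the dominance argument $\lambda - \rho \not\leq \mu - \rho$ to pin down the top weight space of $d(E)$ and ensure that $\phi$ is not swallowed by the socle $L(\mu)$. Once these are in place, both directions are short.
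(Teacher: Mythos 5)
The paper offers no argument of its own here -- Lemma \ref{jflem2} is stated as ``well known'' -- so your proposal can only be judged on its own terms, and on those terms it is correct and follows the standard route: duality plus the universal property of $M_I(\lambda)$ for the first implication, and indecomposability plus the block decomposition for the second. The converse direction is fine as written: $M_I(\lambda)$ has a unique maximal submodule by Theorem \ref{jfthm1}(2), hence is indecomposable, hence lies in a single block, and since the blocks are by definition the equivalence classes generated by nonvanishing $\Ext^1$ between simples, every composition factor of $M_I(\lambda)$ is $\Ext^1$-connected to the top $L(\lambda)$, which is exactly $\lambda\lera\mu$ in the sense of Definition \ref{jfdef3}.

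One step in the forward direction is misstated, though the intended argument is sound and easily repaired. The $(\lambda-\rho)$-weight space of $d(E)$ is a one-dimensional $\frh$-weight space; it is not $\frl$-stable and cannot ``inherit the $\frl$-module structure $F(\lambda-\rho)$.'' What you should say is: pick $v\in d(E)$ of weight $\lambda-\rho$ mapping to the highest weight vector of the quotient $L(\lambda)$; since all weights of $d(E)$ are $\leq\lambda-\rho$ (here $\mu<\lambda$ is used), $v$ is annihilated by $\frn$ and in particular by $\fru$; since $d(E)\in\caO^{\frp}$ is locally $\frl$-finite and $\lambda-\rho$ is $I$-dominant integral, the $U(\frl)$-submodule $U(\frl)v$ is finite dimensional, hence isomorphic to $F(\lambda-\rho)$, and it is killed by $\fru$ because $\fru\, U(\frl)\subseteq U(\frl)\,\fru$ and $\fru v=0$ by the same weight argument. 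This furnishes the $\frp$-map $F(\lambda-\rho)\to d(E)$ needed for the universal property, and the rest of your argument (the only proper nonzero submodule of $d(E)$ is $L(\mu)$, so $\phi$ is onto and $[M_I(\lambda):L(\mu)]>0$) goes through verbatim. You should also record, if only in a sentence, that $d$ preserves $\caO^{\frp}$ and its simples and that $\Ext^1_{\caO^{\frp}}$ may be computed by such short exact sequences in $\caO^{\frp}$; these are the standard facts your first paragraph silently invokes.
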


\begin{lemma}\label{jflem3}
If $c(\lambda, \mu)\neq0$ for $\lambda, \mu\in\lambda_I^+$, there exists $\beta\in\Psi_\lambda^+$ and $w\in W_I$ so that $\mu=ws_\beta\lambda$. Moreover, $\lambda\lera \mu$.
\end{lemma}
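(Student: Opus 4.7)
The plan is to handle the two assertions of the lemma separately. For the first assertion, I would read $\beta$ and $w$ directly off the defining identity (\ref{jfeq2}). Since any $\beta\in\Psi_\lambda^{++}$ makes $s_\beta\lambda$ regular for $\Phi_I$, there is a unique $w_\beta\in W_I$ carrying $s_\beta\lambda$ into $\Lambda_I^+$, and a standard $W_I$-symmetry of the formal character $\theta$ yields
\[
\theta(s_\beta\lambda)=(-1)^{\ell(w_\beta)}\ch M_I(w_\beta s_\beta\lambda).
\]
Summing over $\beta\in\Psi_\lambda^{++}$, the Jantzen coefficient $c(\lambda,\mu)$ reduces to a signed count of those $\beta$ for which $w_\beta s_\beta\lambda=\mu$. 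If $c(\lambda,\mu)\neq0$ at least one such $\beta$ must contribute, and since $\Psi_\lambda^{++}\subset\Psi_\lambda^+$, the desired expression $\mu=w_\beta s_\beta\lambda$ follows.

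For the second assertion $\lambda\lera\mu$, I would argue by strong induction on the cardinality of
\[
I(\lambda,\mu):=\{\nu\in\Lambda_I^+\mid \mu\leq\nu\leq\lambda\}.
\]
Note that $c(\lambda,\mu)\neq0$ forces $\lambda>\mu$ (recorded just before Definition \ref{jfdef1}), so $|I(\lambda,\mu)|\geq 2$. The base case $|I(\lambda,\mu)|=2$ is immediate: no weight of $\Lambda_I^+$ lies strictly between $\mu$ and $\lambda$, so $\lambda$ is adjacent to $\mu$ by Definition \ref{jfdef1}; Lemma \ref{jflem1}(2) then gives $[M_I(\lambda):L(\mu)]>0$, and Lemma \ref{jflem2} promotes this to $\lambda\lera\mu$.

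In the inductive step, if $\lambda$ is already adjacent to $\mu$ the base-case argument applies verbatim. Otherwise Definition \ref{jfdef1} supplies $\nu\in\Lambda_I^+$ with $\lambda>\nu>\mu$ together with chains
\[
\lambda=\lambda^0,\lambda^1,\ldots,\lambda^k=\nu,\qquad \nu=\mu^0,\mu^1,\ldots,\mu^s=\mu,
\]
in which every consecutive Jantzen coefficient is nonzero. Each pair $(\lambda^{i-1},\lambda^i)$ thus satisfies the same hypothesis as the lemma; moreover $\lambda^i\geq\nu>\mu$ forces $\mu\notin I(\lambda^{i-1},\lambda^i)$, so $I(\lambda^{i-1},\lambda^i)\subsetneq I(\lambda,\mu)$. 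An analogous inequality holds along the $\mu^i$-chain using $\mu^i\leq\nu<\lambda$. The induction hypothesis yields $\lambda^{i-1}\lera\lambda^i$ and $\mu^{i-1}\lera\mu^i$, and concatenation delivers $\lambda\lera\nu\lera\mu$.

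The main obstacle I anticipate is the bookkeeping in the inductive step: one must verify that the intermediate weights $\lambda^i,\mu^i$ produced by the chain definition of $\succ$ really lie inside $[\mu,\lambda]$ and stay strictly off at least one endpoint, so that each sub-interval is genuinely smaller. This reduces to the standing fact that $c(\lambda',\mu')\neq0$ implies $\lambda'>\mu'$, but it is the one place where the partial order on $\Lambda_I^+$ must be tracked attentively.
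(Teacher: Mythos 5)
Your argument is correct. For the first assertion you essentially reprove what the paper cites: the paper simply invokes the closed formula $c(\lambda,\mu)=\sum_{\beta}(-1)^{\ell(w_\beta)}$ over the roots $\beta$ with $w_\beta s_\beta\lambda=\mu$ from \cite{XZ}, \S4, whereas you rederive it from (\ref{jfeq1})--(\ref{jfeq2}) using the $\Phi_I$-regularity of $s_\beta\lambda$ for $\beta\in\Psi_\lambda^{++}$, the sign rule $\theta(u\nu)=(-1)^{\ell(u)}\theta(\nu)$ for $u\in W_I$, and the linear independence of the characters $\ch M_I(\nu)$; same idea, just made self-contained. The real divergence is in the ``Moreover'' part: the paper dismisses it with the single sentence that it suffices to prove the first statement, leaving the deduction of $\lambda\lera\mu$ implicit, while you give an explicit strong induction on the interval $\{\nu\in\Lambda_I^+\mid\mu\le\nu\le\lambda\}$, reducing to adjacent pairs and then applying Lemma \ref{jflem1}(2) and Lemma \ref{jflem2}. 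This is sound: the sub-intervals are strictly smaller because every nonzero Jantzen coefficient forces a strict inequality in the (transitive, antisymmetric) Hom-order, the argument uses only results established before Lemma \ref{jflem3} (so there is no circularity with Lemma \ref{bkconlem5}, which relies on this lemma), and it runs in the same spirit as the chain argument in the proof of Lemma \ref{jflem1}(1). In effect your induction supplies exactly the detail the paper suppresses, which is a gain in completeness rather than a flaw; the only caveat is that the fact ``$c(\lambda,\mu)\neq0$ implies $\mu<\lambda$'' that anchors your induction is stated in the paper just before Definition \ref{jfdef1} with an evident typo ($c(\lambda,\mu)=0$), and you correctly use the intended reading rather than deriving it independently.
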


\begin{proof}
It suffices to prove the first statement. Note that $c(\lambda, \mu)=\sum_{\beta\in \Psi_{\lambda, \mu}^+}(-1)^{\ell(w_\beta)}$ (see \cite{XZ}, \S4), where
\[
\Psi_{\lambda, \mu}^+:=\{\beta\in\Psi_\lambda^+\mid \mu=w_\beta s_\beta\lambda\ \mbox{for some}\ w_\beta\in W_I\}.
\]
So $\Psi_{\lambda, \mu}^+$ is not empty in view of $c(\lambda, \mu)\neq0$. Choose $\beta\in \Psi_{\lambda, \mu}^+$ and set $w=w_\beta$.
\end{proof}

Lemma \ref{jflem3} and the dual invariance of Jantzen coefficients (see Lemma 4.17 in \cite{XZ}) yield the following result.

\begin{lemma}\label{jflem4}
Let $I, J\subset\Delta$. Suppose that $\lambda$ $($resp. $\mu)$ is a dominant weight with $\Phi_\lambda=\Phi_J$ $($resp. $\Phi_\mu=\Phi_I)$. Choose $x, w\in{}^IW^J$. Then $x\lambda\lera w\lambda$ $($relative to $(\Phi_I, \Phi))$ if and only if $x^{-1}\mu\lera w^{-1}\mu$ $($relative to $(\Phi_J, \Phi))$.
\end{lemma}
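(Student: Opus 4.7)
The plan is to rephrase the $\lera$-equivalence purely in terms of nonvanishing Jantzen coefficients and then to transport chains of such nonvanishing coefficients across the parabolic--parabolic duality via Lemma 4.17 of \cite{XZ}.

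First, I would verify that on $\Lambda_I^+$ the equivalence relation generated by $\lera$ coincides with the equivalence relation $\sim_c$ generated by ``$c(\nu_1, \nu_2) \neq 0$ or $c(\nu_2, \nu_1) \neq 0$''. One inclusion is Lemma \ref{jflem3}: any nonvanishing Jantzen coefficient already forces $\lera$. For the other inclusion, assume $\Ext^1_{\caO^\frp}(L(\mu), L(\lambda)) \neq 0$, so that either $\mu < \lambda$ or $\lambda < \mu$; in the first case Lemma \ref{jflem2} gives $[M_I(\lambda) : L(\mu)] > 0$ and Lemma \ref{jflem1}(1) then yields a chain of nonzero Jantzen coefficients descending from $\lambda$ to $\mu$, with the second case symmetric. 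Combining this with Lemma \ref{jflem3}, and using the $W_I$-insensitivity of $\lera$ built into Definition \ref{jfdef3}, every intermediate term in such a chain can be taken of the form $x_i\lambda$ with $x_i \in {}^IW^J$. Hence $x\lambda \lera w\lambda$ relative to $(\Phi_I, \Phi)$ holds precisely when there exists a sequence $x = x_0, x_1, \ldots, x_k = w$ in ${}^IW^J$ with $c(x_{i-1}\lambda, x_i\lambda) \neq 0$ or $c(x_i\lambda, x_{i-1}\lambda) \neq 0$ at every step.

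Second, I would invoke the dual invariance of Jantzen coefficients (Lemma 4.17 of \cite{XZ}) in the form: for $x, y \in {}^IW^J$, the coefficient $c(x\lambda, y\lambda)$ computed relative to $(\Phi_I, \Phi)$ is nonzero if and only if $c(x^{-1}\mu, y^{-1}\mu)$ computed relative to $(\Phi_J, \Phi)$ is nonzero. Under the bijection $x \mapsto x^{-1}$ between ${}^IW^J$ and ${}^JW^I$, the chain from the first step transports to a $c$-chain on the $\mu$-side, and a final application of the first-step equivalence (with the roles of $(\lambda, I, J)$ and $(\mu, J, I)$ swapped) converts it into a $\lera$-chain witnessing $x^{-1}\mu \lera w^{-1}\mu$. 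The reverse implication is identical by symmetry.

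The main obstacle will be pinning down the precise content of Lemma 4.17 of \cite{XZ} and checking that it preserves \emph{nonvanishing} of Jantzen coefficients (not merely a weaker numerical identity) under the correspondence ${}^IW^J \leftrightarrow {}^JW^I$. A secondary technicality is the reduction of arbitrary $c$-chains to chains whose intermediate terms lie in ${}^IW^J\cdot\lambda$, which rests on the $W_I$-invariance built into the definition of $c$ via the sum over $\Psi_{\lambda, \mu}^+$ appearing in the proof of Lemma \ref{jflem3}.
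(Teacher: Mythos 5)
Your proposal is correct and follows essentially the same route as the paper, whose entire proof is the one-line observation that Lemma \ref{jflem3} (so that the relation $\lera$ is generated by nonvanishing Jantzen coefficients, as later formalized in Lemma \ref{bkconlem5}) together with the dual invariance of Jantzen coefficients (Lemma 4.17 of \cite{XZ}) yields the statement. Your two steps are exactly these ingredients, just written out in more detail, including the observation that all intermediate weights in a chain of nonzero coefficients may be taken of the form $x_i\lambda$ with $x_i\in{}^IW^J$.
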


The above lemma shows a kind of parabolic-singular duality for category $\caO^\frp$ \cite{So, BGS, B}.
Lemma \ref{jflem3} and the conjugate invariance of Jantzen coefficients (see Lemma 4.18 in \cite{XZ}) implies the following lemma.

\begin{lemma}\label{jflem5}
Let $I, J\subset\Delta$. Suppose that $\Phi_I$ and $\Phi_{J}$ are $W$-conjugate, choose $w\in W$ so that $\Phi_{J}^+=w\Phi_I^+$. Let $\lambda, \mu\in\Lambda_I^+$. Then $\lambda\lera\mu$ $($relative to $(\Phi_{I}, \Phi))$ if and only if $w\lambda\lera w\mu$ $($relative to $(\Phi_{J}, \Phi))$.
\end{lemma}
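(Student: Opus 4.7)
The plan is to transport $\lera$-chains across the conjugation by $w$, using the conjugate invariance of Jantzen coefficients (Lemma 4.18 in \cite{XZ}) together with a characterization of $\lera$ by chains of nonvanishing Jantzen coefficients that is already implicit in Lemmas \ref{jflem1}--\ref{jflem3}.

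First, I would check that $w$ sends $\Lambda_I^+$ bijectively onto $\Lambda_J^+$. For $\lambda\in\Lambda_I^+$ and $\beta\in J\subset\Phi_J^+=w\Phi_I^+$, the element $w^{-1}\beta$ lies in $\Phi_I^+$, so $\langle w\lambda,\beta^\vee\rangle=\langle\lambda,(w^{-1}\beta)^\vee\rangle$ is a positive integer, using that the defining condition of $\Lambda_I^+$ on the simple roots $I$ extends to all of $\Phi_I^+$ via the nonnegative integer expansion of positive coroots in simple coroots. The inverse bijection is provided by $w^{-1}$.

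Next, I would establish the following characterization: for $\lambda,\mu\in\Lambda_I^+$, one has $\lambda\lera\mu$ relative to $(\Phi_I,\Phi)$ if and only if there is a chain $\lambda=\nu^0,\nu^1,\ldots,\nu^k=\mu$ in $\Lambda_I^+$ with $c(\nu^{i-1},\nu^i)\neq0$ or $c(\nu^i,\nu^{i-1})\neq0$ for each $i$. The ``if'' direction is immediate from Lemma \ref{jflem3}. For ``only if'' it suffices to treat a single $\Ext^1$-link $\nu\leftrightarrow\nu'$; assuming without loss of generality $\nu'<\nu$, Lemma \ref{jflem2} gives $[M_I(\nu):L(\nu')]>0$, and then Lemma \ref{jflem1}(1) yields $\nu\succeq\nu'$, which by Definition \ref{jfdef1} unpacks into a chain of nonzero Jantzen coefficients. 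Concatenating these subchains over all Ext-links in the defining sequence for $\lambda\lera\mu$ produces the desired chain.

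Finally, I would apply the conjugate invariance of Jantzen coefficients (Lemma 4.18 in \cite{XZ}): for all $\nu_1,\nu_2\in\Lambda_I^+$, nonvanishing of $c(\nu_1,\nu_2)$ computed with respect to $(\Phi_I,\Phi)$ is equivalent to nonvanishing of the corresponding Jantzen coefficient for $(w\nu_1,w\nu_2)$ computed with respect to $(\Phi_J,\Phi)$. Applying this step-by-step, the chain constructed above, with each $\nu^i$ replaced by $w\nu^i\in\Lambda_J^+$, becomes a chain of nonzero Jantzen coefficients in the $(\Phi_J,\Phi)$ setting, so the characterization (now applied with $J$ in place of $I$) gives $w\lambda\lera w\mu$ relative to $(\Phi_J,\Phi)$. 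The reverse implication follows by the same argument with $w$ replaced by $w^{-1}$ and the roles of $(I,\Phi_I^+)$ and $(J,\Phi_J^+)$ exchanged. The main technical hurdle I anticipate is bookkeeping rather than conceptual: one must make sure that the single hypothesis $w\Phi_I^+=\Phi_J^+$ is enough to yield all of the set-level compatibilities (the bijection $w\Lambda_I^+=\Lambda_J^+$, and the correct behavior of the sets $\Psi_\lambda^+$ under $w$) on which Lemma 4.18 of \cite{XZ} implicitly rests.
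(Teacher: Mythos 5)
Your argument is correct and is essentially the paper's own: the paper derives this lemma in one line from Lemma~\ref{jflem3} together with the conjugate invariance of Jantzen coefficients (Lemma 4.18 of \cite{XZ}), which is exactly your chain-transport strategy, and your intermediate characterization of $\lera$ by chains of nonzero Jantzen coefficients is the content the paper records separately as Lemma~\ref{bkconlem5} (proved, as you do, from Lemmas~\ref{jflem1}--\ref{jflem3}, so no circularity). Your extra checks (that $w$ carries $\Lambda_I^+$ onto $\Lambda_J^+$ and the step-by-step transport of the chain) are just the bookkeeping the paper leaves implicit.
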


\begin{remark}\label{bkconrmk1}
In type $E_7$, there are subsets $I, I'\subset\Delta$ such that $\Phi_{I}$ and $\Phi_{I'}$ are of the same type, but they are not $W$-conjugate (see \cite{CM} or \cite{P2}). This happens when $\Phi_I$ is of type $A_5$, $A_3\times A_1$ and $A_1^3$. In each of these cases, there are two conjugate classes, which are denoted by $\Phi'_I$ and $\Phi''_I$.
\end{remark}

\begin{lemma}\label{bkconlem0}
Let $\lambda, \mu\in\Lambda_I^+$. If $\lambda\lera\mu$ relative to $(\Phi_I, \Phi)$, then $\lambda\lera\mu$ relative to $(\Phi_{I'}, \Phi)$ for any $I'\subset I$.
\end{lemma}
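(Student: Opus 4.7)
The plan is to reduce the assertion to the standard categorical fact that, for $I'\subset I$, the parabolic subcategory $\caO^\frp$ sits inside $\caO^{\frp'}$ as a Serre subcategory (where $\frp'$ denotes the standard parabolic corresponding to $I'$), so that $\Ext^1$ between simple modules of $\caO^\frp$ agrees whether computed in $\caO^\frp$ or in $\caO^{\frp'}$. Since the relation $\lera$ in Definition \ref{jfdef3} is built directly from the nonvanishing of $\Ext^1$ between simples $L(\nu)$ with $\nu\in\Lambda_I^+$, this comparison will be enough.

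First I would record the set-theoretic containment $\Lambda_I^+\subset\Lambda_{I'}^+$: the defining condition $\langle\nu,\alpha^\vee\rangle\in\bbZ^{>0}$ for all $\alpha\in I$ is strictly stronger than the same condition restricted to $\alpha\in I'$. Hence every vertex of an $\lera$-chain relative to $(\Phi_I,\Phi)$ is automatically eligible as a vertex of a chain relative to $(\Phi_{I'},\Phi)$.

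Next I would verify that $\caO^\frp$ is a Serre subcategory of $\caO^{\frp'}$. The cleanest route is to use the characterization of $\caO^\frp$ as those objects in $\caO$ whose composition factors $L(\nu)$ all have $\nu\in\Lambda_I^+$; this property is manifestly stable under subobjects, quotients, and extensions. A standard consequence is that for any $X,Y\in\caO^\frp$,
\[
\Ext^1_{\caO^\frp}(X,Y)\cong\Ext^1_{\caO^{\frp'}}(X,Y),
\]
since every extension in $\caO^{\frp'}$ of two objects of $\caO^\frp$ has middle term automatically in $\caO^\frp$.

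With these two observations the proof becomes immediate: take a chain $\lambda=\lambda^0,\lambda^1,\ldots,\lambda^r=\mu$ in $\Lambda_I^+$ witnessing $\lambda\lera\mu$ relative to $(\Phi_I,\Phi)$; the first step places the chain inside $\Lambda_{I'}^+$, and the second preserves each consecutive $\Ext^1$-nonvanishing when passing to $\caO^{\frp'}$, so the same chain witnesses $\lambda\lera\mu$ relative to $(\Phi_{I'},\Phi)$. I do not foresee a substantive obstacle here; the lemma is essentially a bookkeeping statement, and the only point requiring a moment's care is confirming the Serre property of $\caO^\frp\hookrightarrow\caO^{\frp'}$. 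The $W_I$-convention appended to Definition \ref{jfdef3} is not needed, since the statement concerns $\lambda,\mu\in\Lambda_I^+$ directly.
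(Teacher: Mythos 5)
Your argument is correct and is essentially the paper's proof: the paper also reduces the lemma to the invariance of $\Ext^1$ between simples $L(\lambda),L(\mu)$ with $\lambda,\mu\in\Lambda_I^+$ under change of parabolic, writing $\Ext^1_{\caO^{\frp_I}}(L(\mu),L(\lambda))\simeq\Ext^1_{\caO^{\frb}}(L(\mu),L(\lambda))\simeq\Ext^1_{\caO^{\frp_{I'}}}(L(\mu),L(\lambda))$, i.e.\ it factors the comparison through the full category $\caO$ rather than embedding $\caO^{\frp}$ directly into $\caO^{\frp'}$ as you do. Your added checks (that $\Lambda_I^+\subset\Lambda_{I'}^+$ and that the extension-closure/Serre property underlies the $\Ext^1$ identification) are exactly the routine points the paper leaves implicit, so the two proofs coincide in substance.
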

\begin{proof}
This follows from the fact that
\[
\Ext_{\caO^{\frp_I}}^1(L(\mu), L(\lambda))\simeq \Ext_{\caO^{\frb}}^1(L(\mu), L(\lambda))\simeq \Ext_{\caO^{\frp_{I'}}}^1(L(\mu), L(\lambda)).
\]
\end{proof}

\begin{definition}\label{bkcondef2}
Suppose $\lambda, \mu\in\Lambda_I^+$. We say $\lambda, \mu$ are {\it linked} $($relative to $(\Phi_I, \Phi))$ when $c(\lambda, \mu)\neq0$. If $\mu=ws_\beta\lambda$ for some $w\in W_I$ and $\beta\in\Psi_\lambda^+$, we say $\beta$ is a {\it linked root} from $\lambda$ to $\mu$ or $\lambda, \mu$ are {\it linked by $\beta$}. In this situation, we will say $w\beta$ is a {\it linked root} from $\mu$ to $\lambda$ (keeping in mind that $\lambda=w^{-1}s_{w\beta}\mu$). In general, if $\beta$ is linked root of $\lambda, \mu\in\Lambda_I^+$, we also say $w_1\beta$ is a linked root from $w_1\lambda$ to $w_2\mu$ (keeping in mind that $w_2\mu=w_2ww_1^{-1}s_{w_1\beta}w_1\lambda$) for $w_1, w_2\in W_I$.
\end{definition}

\begin{lemma}\label{bkconlem5}
For $\lambda, \mu\in\Lambda_I^+$, $\lambda\lera\mu$ if and only if we can find $\lambda^0=\lambda, \lambda^1, \cdots, \lambda^r=\mu\in\Lambda_I^+$ with linked roots $\beta_i$ from $\lambda^{i-1}$ to $\lambda^{i}$ for $1\leq i\leq r$.
\end{lemma}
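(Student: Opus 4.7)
The plan is to prove the two implications separately, translating between the $\lera$ relation (defined by chains of Ext connections) and chains of nonzero Jantzen coefficients through Lemmas \ref{jflem1}, \ref{jflem2}, and \ref{jflem3}.

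For the ($\Leftarrow$) direction, assume we have a chain $\lambda = \lambda^0, \lambda^1, \ldots, \lambda^r = \mu$ in $\Lambda_I^+$ together with linked roots $\beta_i$ from $\lambda^{i-1}$ to $\lambda^i$. Reading the conventions of Definition \ref{bkcondef2} together with its reversal clause, the existence of $\beta_i$ says that the pair $\lambda^{i-1}, \lambda^i$ is linked in the Jantzen sense, so $c(\lambda^{i-1}, \lambda^i) \neq 0$ or $c(\lambda^i, \lambda^{i-1}) \neq 0$. In either case, Lemma \ref{jflem3} yields $\lambda^{i-1} \lera \lambda^i$. Since $\lera$ is an equivalence relation, transitivity closes out to $\lambda \lera \mu$.

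For the ($\Rightarrow$) direction, unfold the definition of $\lera$: there exists a chain $\lambda = \lambda^0, \lambda^1, \ldots, \lambda^r = \mu$ of distinct weights in $\Lambda_I^+$ with $\Ext^1_{\caO^\frp}(L(\lambda^{i-1}), L(\lambda^i)) \neq 0$ at each step. At step $i$, either $\lambda^{i-1} < \lambda^i$ or $\lambda^i < \lambda^{i-1}$. Lemma \ref{jflem2} converts the nonvanishing Ext into a positive composition factor multiplicity $[M_I(\text{larger}) : L(\text{smaller})] > 0$, and Lemma \ref{jflem1}(1) then produces an intermediate sub-chain from the larger weight down to the smaller along which consecutive Jantzen coefficients are nonzero. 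Lemma \ref{jflem3} extracts an explicit linked root for each sub-step. Splicing all these sub-chains in order yields the desired full chain from $\lambda$ to $\mu$.

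The one subtlety that requires care is orientation. Each sub-chain produced by Lemma \ref{jflem1}(1) runs strictly downward in the $\succeq$ partial order (from the larger of $\lambda^{i-1}, \lambda^i$ to the smaller), but the Ext connections in the original chain may point in either direction at successive steps, so consecutive sub-chains may disagree about which endpoint is ``larger.'' The reversal convention in Definition \ref{bkcondef2}, which manufactures a linked root from $\mu$ to $\lambda$ out of one from $\lambda$ to $\mu$, is precisely what lets each sub-chain be reoriented to run in the forward direction $\lambda^{i-1}\to\lambda^i$ of the outer chain, so the pieces splice together cleanly and the witnessing roots $\beta_i$ can be chosen consistently.
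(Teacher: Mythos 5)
Your proposal is correct and follows essentially the same route as the paper: Lemma \ref{jflem2} plus Lemma \ref{jflem1}(1) to turn each $\Ext^1$ step into a $\succ$-chain of nonzero Jantzen coefficients, Lemma \ref{jflem3} to extract linked roots, and Lemma \ref{jflem3} again for the converse. The only difference is presentational — you splice sub-chains and reorient them via the reversal clause of Definition \ref{bkcondef2}, whereas the paper compresses this into the reduction ``it suffices to consider a single $\Ext^1$ step with $\mu<\lambda$.''
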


\begin{proof}
(1) Assume that $\lambda\lera\mu$. By Definition \ref{jfdef3}, it suffices to consider the case $\Ext^1_{\caO^{\frp}}(L(\mu), L(\lambda))\neq0$ with $\mu<\lambda$. In view of Lemma \ref{jflem2}, we have $[M_I(\lambda), L(\mu)]>0$. Lemma \ref{jflem1} yields a chain $\lambda=\lambda^0\succ \lambda^1\succ\cdots\succ \lambda^k=\mu$ such that $\lambda^i\in \Lambda_I^+$ and $c(\lambda, \mu)\neq0$. Then Lemma \ref{jflem3} gives linked roots $\beta_i$ from $\lambda^{i-1}$ to $\lambda^{i}$.

(2) The converse follows from Lemma \ref{jflem3}.
\end{proof}

The Jantzen coefficients have several invariant properties under a reduction process (see \cite{XZ}, \S3). Choose $\beta\in\Phi$. Let $\Phi_{\beta, 0}$ be the irreducible component of $\Phi$ with $\beta\in\Phi_{\beta, 0}$. Set
\[
\begin{aligned}
\Phi_{\beta,1}:=&(\bbQ\Phi_I+\bbQ\beta)\cap\Phi;\\
\Phi_{\beta,2}:=&(\bbQ \Phi_\lambda+\bbQ\beta)\cap\Phi.
\end{aligned}
\]
For $\beta\in\Psi_\lambda^+$, consider the following chain of subsystems
\begin{equation*}
\Phi=\Phi_0(\beta)\supset\Phi_1(\beta)\supset\Phi_2(\beta)\supset\ldots\supset\Phi_m(\beta)\supset\ldots
\end{equation*}
such that for $i\in\bbZ^{>0}$,
\begin{equation*}\label{redseq1}
\Phi_{i+1}(\beta)
=\left\{\begin{aligned}
&(\Phi_{i}(\beta))_{\beta,0}\qquad\qquad\qquad\quad\ \mbox{if}\ i\equiv0\ (\mathrm{mod}4);\\
&(\Phi_{i}(\beta))_{\beta,1}\qquad\qquad\qquad\quad\ \mbox{if}\ i\equiv1\ (\mathrm{mod}4);\\
&(\Phi_{i}(\beta))_{\beta,0}\qquad\qquad\qquad\quad\ \mbox{if}\ i\equiv2\ (\mathrm{mod}4);\\
&(\Phi_{i}(\beta))_{\beta,2}\qquad\qquad\qquad\quad\ \mbox{if}\ i\equiv3\ (\mathrm{mod}4).
\end{aligned}
\right.
\end{equation*}
The chain is stationary, that is, there exists $k\in\bbZ^{>0}$ with $\Phi_k(\beta)=\Phi_{k+1}(\beta)=\ldots$. Denote $\Phi(\beta)=\Phi_k(\beta)$. Recall that for any subsystem $\Phi'$ of $\Phi$, there exists a unique weight $\lambda|_{\Phi'}$ in the subspace $\bbC\Phi'$ so that
\[
\langle\lambda|_{\Phi'}, \alpha^\vee\rangle=\langle\lambda, \alpha^\vee\rangle
\]
for all $\alpha\in\Phi'$.

\begin{lemma}[{\cite[Lemma 3.17]{XZ}}]\label{lrlem1}
Let $\lambda\in\Lambda_I^+$ and $\beta\in\Psi_\lambda^+$. Then $\Phi(\beta)$ is irreducible and $\lambda|_{\Phi(\beta)}$ is an integral weight on $\Phi(\beta)$. Moreover,
\[
\rank(\Phi_I\cap\Phi(\beta))=\rank(\Phi_\lambda\cap\Phi(\beta))=\rank\ \Phi(\beta)-1.
\]
\end{lemma}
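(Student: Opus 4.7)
The plan is to exploit the stationarity of the chain: since $\Phi(\beta) = \Phi_k(\beta) = \Phi_{k+1}(\beta) = \cdots$, the set $\Phi(\beta)$ must be a fixed point of each of the three reduction operations $\Phi' \mapsto (\Phi')_{\beta, 0}$, $\Phi' \mapsto (\Phi')_{\beta, 1}$, and $\Phi' \mapsto (\Phi')_{\beta, 2}$ appearing in the four-step cycle. Irreducibility then falls out immediately from stationarity under $\Phi' \mapsto (\Phi')_{\beta, 0}$: this operation returns the irreducible component through $\beta$, so its fixed points are exactly the irreducible subsystems containing $\beta$.

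For the two rank identities, stationarity under $\Phi' \mapsto (\Phi')_{\beta, 1}$ yields the inclusion $\Phi(\beta) \subset \bbQ\Phi_I + \bbQ\beta$, and similarly stationarity under $\Phi' \mapsto (\Phi')_{\beta, 2}$ yields $\Phi(\beta) \subset \bbQ\Phi_\lambda + \bbQ\beta$. In each case the decisive point is that $\beta$ lies outside the first summand: $\beta \notin \bbQ\Phi_I$ because $\beta \in \Phi \setminus \Phi_I$ and $\Phi_I = \Phi \cap \bbQ I$ (the Levi subsystem generated by the simple roots of $I$), while $\beta \notin \bbQ\Phi_\lambda$ because $\bbQ\Phi_\lambda \subset \lambda^\perp$ and $(\lambda, \beta) \neq 0$. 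Consequently $\bbQ\Phi(\beta) = (\bbQ\Phi(\beta) \cap \bbQ\Phi_I) \oplus \bbQ\beta$, and the analogue holds with $\Phi_\lambda$ in place of $\Phi_I$. Identifying $\bbQ\Phi(\beta) \cap \bbQ\Phi_I$ with $\bbQ(\Phi_I \cap \Phi(\beta))$ (and similarly for $\Phi_\lambda$) using that every $\Phi_i(\beta)$ arises as $\Phi \cap V$ for some subspace $V$ then produces both rank equalities.

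For integrality of $\lambda|_{\Phi(\beta)}$, I would decompose each $\alpha \in \Phi(\beta)$ as $\alpha = \gamma + c\beta$ with $\gamma \in \bbQ\Phi_\lambda$ and $c \in \bbQ$, so that $(\lambda, \alpha) = c(\lambda, \beta)$ and $\lambda|_{\Phi(\beta)}$ lies on the unique line in $\bbQ\Phi(\beta)$ perpendicular to $\Phi_\lambda \cap \Phi(\beta)$. Since $\lambda \in \Lambda_I^+$ pairs integrally with every coroot of $\Phi_I$ (as simple coroots span the coroot lattice), and in particular with every coroot of the rank $\rank\Phi(\beta) - 1$ subsystem $\Phi_I \cap \Phi(\beta)$, while in addition $\langle \lambda, \beta^\vee\rangle \in \bbZ^{>0}$, the weight $\lambda|_{\Phi(\beta)}$ already pairs integrally with a family of coroots spanning $\bbQ\Phi(\beta)$. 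Integrality on all of $\Phi(\beta)$ then reduces to checking that this family actually generates the full coroot lattice of the irreducible system $\Phi(\beta)$.

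The main obstacle is precisely this last lattice-generation step: a priori a collection of coroots spanning $\bbQ\Phi(\beta)$ need not generate the full coroot lattice, which would leave room for half-integer pairings on the remaining coroots. Ruling this out should require a careful examination of how the interlocking intersections with $\bbQ\Phi_I + \bbQ\beta$ and $\bbQ\Phi_\lambda + \bbQ\beta$ jointly pin down the integral structure of $\Phi(\beta)$, likely via a short case analysis on the possible Dynkin types of $\Phi(\beta)$. Irreducibility and the rank identities, by contrast, follow rather directly from the stationarity hypothesis and the definitions.
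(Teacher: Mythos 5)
This lemma is not proved in the paper at all: it is quoted verbatim from [XZ, Lemma 3.17], so there is no internal argument to compare against and your attempt has to stand on its own. The parts of your sketch concerning irreducibility and the two rank identities are essentially sound: stationarity under the component operation gives irreducibility, and stationarity under the other two operations gives $\Phi(\beta)\subseteq \bbQ(\Phi_I\cap\Phi(\beta))+\bbQ\beta$ and $\Phi(\beta)\subseteq \bbQ(\Phi_\lambda\cap\Phi(\beta))+\bbQ\beta$, from which the rank statements follow because $\beta\notin\bbQ\Phi_I$ and $\beta\notin\bbQ\Phi_\lambda$. One small repair: your identification of $\bbQ\Phi(\beta)\cap\bbQ\Phi_I$ with $\bbQ(\Phi_I\cap\Phi(\beta))$ is not a consequence of $\Phi_i(\beta)$ being of the form $\Phi\cap V$ (saturation alone does not equate a span-intersection with the span of an intersection); the correct justification is again the stationarity inclusion above together with $\beta\notin\bbQ\Phi_I$, applied to a vector $v=u+c\beta$ with $u\in\bbQ(\Phi_I\cap\Phi(\beta))$.

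The genuine gap is the integrality of $\lambda|_{\Phi(\beta)}$, which you explicitly leave as an ``obstacle'' and which is in fact the substantive content of the lemma (it is what makes the reduction to basic systems with integral weights usable in Proposition \ref{lrprop2}). Knowing that $\langle\lambda,\alpha^\vee\rangle\in\bbZ$ for $\alpha\in\Phi_I\cap\Phi(\beta)$ and for $\alpha=\beta$ only gives integrality on the sublattice of the coroot lattice of $\Phi(\beta)$ generated by $(\Phi_I\cap\Phi(\beta))^\vee\cup\{\beta^\vee\}$, and a linearly independent spanning family of coroots can generate a proper finite-index sublattice even in an irreducible simply-laced system (already in $D_4$ one finds index $2$), so half-integral pairings are not excluded by this count. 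Ruling them out genuinely requires the second corank-one subsystem: one must exploit simultaneously that $\lambda$ vanishes on $\Phi_\lambda\cap\Phi(\beta)$, is positive integral on $\Phi_I\cap\Phi(\beta)$, that these subsystems are disjoint and both saturated of corank one, and that $\beta\notin\Phi_I\cup\Phi_\lambda$; simple rank-two and rank-four experiments show that configurations violating integrality also violate one of these constraints, so the argument is an interplay between $\Phi_I$, $\Phi_\lambda$ and $\beta$ rather than a lattice-index check ``by Dynkin type of $\Phi(\beta)$'' alone. This is precisely the part carried out in [XZ] (via the analysis/classification of basic systems), and without it your proposal does not establish the statement.
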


In \cite{XZ}, the system $(\Phi, \Phi_I, \Phi_J)$ is called a basic system if $\Phi$ is irreducible and $\rank\Phi_I=\rank\Phi_\lambda=\rank\Phi-1$. The above lemma shows that $(\Phi(\beta), \Phi_I\cap\Phi(\beta), \Phi_\lambda\cap\Phi(\beta))$ is a basic system.

\begin{prop}\label{lrprop2}
Let $\lambda\in\Lambda_I^+$ and $\beta\in\Psi_\lambda^+$. Then $\lambda$ and $s_\beta\lambda$ are linked by $\beta$ $($relative to $(\Phi_I, \Phi))$ if and only if $\lambda|_{\Phi(\beta)}$ and $s_\beta\lambda|_{\Phi(\beta)}$ are linked by $\beta$ $($relative to $(\Phi_I\cap\Phi(\beta), \Phi(\beta)))$.
\end{prop}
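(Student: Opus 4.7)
My plan is to reduce the biconditional to a statement about nonvanishing of a single Jantzen coefficient and then invoke the invariance of Jantzen coefficients under the reduction process of \cite{XZ}, \S3.

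First I would unpack ``linked by $\beta$'' for the specific pair $(\lambda, s_\beta\lambda)$. Since $s_\beta\lambda = e\cdot s_\beta\lambda$ automatically, the root $\beta$ lies in $\Psi_{\lambda, s_\beta\lambda}^+$, and the condition that $\lambda$ and $s_\beta\lambda$ be linked by $\beta$ relative to $(\Phi_I, \Phi)$ reduces to the single inequality $c(\lambda, s_\beta\lambda)\neq 0$. Lemma \ref{lrlem1} tells us that $\beta\in\Phi(\beta)$, that $\lambda|_{\Phi(\beta)}$ is integral, and that $\beta$ belongs to $\Psi_{\lambda|_{\Phi(\beta)}}^+$, so the identical reformulation applies in the reduced setting, where the condition becomes $c'(\lambda|_{\Phi(\beta)}, s_\beta\lambda|_{\Phi(\beta)})\neq 0$, with $c'$ the Jantzen coefficient for the basic system $(\Phi(\beta), \Phi_I\cap\Phi(\beta), \Phi_\lambda\cap\Phi(\beta))$.

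The core of the argument is to show $c(\lambda, s_\beta\lambda) = c'(\lambda|_{\Phi(\beta)}, s_\beta\lambda|_{\Phi(\beta)})$, which I would prove by induction along the reduction chain
\[
\Phi = \Phi_0(\beta) \supset \Phi_1(\beta) \supset \cdots \supset \Phi_k(\beta) = \Phi(\beta).
\]
Writing $c(\lambda, s_\beta\lambda) = \sum_{\beta'\in\Psi_{\lambda, s_\beta\lambda}^+}(-1)^{\ell(w_{\beta'})}$ as in Lemma \ref{jflem3}, the inductive step asks that every contributing $\beta'$ computed with respect to $\Phi_i(\beta)$ already lies in $\Phi_{i+1}(\beta)$ and is matched by the same sign. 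The defining equation $s_{\beta'}\lambda = w s_\beta\lambda$ with $w\in W_{\Phi_I\cap\Phi_i(\beta)}$ forces $\beta'$ into $\bbQ\Phi_I + \bbQ\Phi_\lambda + \bbQ\beta$ and into the same irreducible component as $\beta$; in each of the four reduction rules defining $\Phi_{i+1}(\beta)$ one of these constraints is precisely what is cut out, so $\beta'\in\Phi_{i+1}(\beta)$ follows automatically. The length $\ell(w_{\beta'})$ is preserved by the same conjugation used in Lemma \ref{jflem5}, and the sum is unchanged term by term.

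The main obstacle will be the bookkeeping at the individual reduction steps: one must check not only that no term of the sum is lost upon restriction, but also that no new term appears in the reduced system that was absent from $\Phi_i(\beta)$. The key input, essentially the content of \cite{XZ}, \S3, is that the relevant $W_I$-orbits of candidate $\beta'$ are concentrated inside $\Phi(\beta)$; once this has been verified in all four congruence cases $i\equiv 0,1,2,3 \pmod{4}$, the forward and backward implications of the proposition are symmetric and the result falls out.
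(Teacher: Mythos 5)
Your opening move --- recasting ``linked by $\beta$'' as nonvanishing of a Jantzen coefficient and then appealing to the reduction process --- is the same starting point as the paper, but two things go wrong. The smaller one: $c(\lambda,s_\beta\lambda)$ is only defined after $s_\beta\lambda$ has been brought into $\Lambda_I^+$ by $W_I$, so part of the biconditional is precisely the assertion that $s_\beta\lambda$ is $\Phi_I$-regular if and only if $s_\beta\lambda|_{\Phi(\beta)}$ is $\Phi_I\cap\Phi(\beta)$-regular. The paper's proof begins with exactly this equivalence (quoting the invariance properties of \cite{XZ}); your proposal never addresses it, so in the $\Phi_I$-singular case neither implication has actually been reduced to a statement about coefficients.

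The more serious gap is in the inductive step along the chain $\Phi_0(\beta)\supset\Phi_1(\beta)\supset\cdots$. From $s_{\beta'}\lambda=ws_\beta\lambda$ with $w\in W_I$ one only gets $\langle\lambda,\beta'^\vee\rangle\beta'=(\lambda-w\lambda)+\langle\lambda,\beta^\vee\rangle w\beta$, hence $\beta'\in\bbQ\Phi_I+\bbQ\beta$. That handles the steps of type $\Phi_{\beta,1}$, but it does not force $\beta'$ into $\bbQ\Phi_\lambda+\bbQ\beta$ (the steps $i\equiv3$) nor into the irreducible component of $\beta$ (the steps $i\equiv0,2$): for instance a root lying in $\bbQ\Phi_I$ but in a different component from $\beta$ is not excluded by this equation alone, and eliminating such contributions (or showing they cannot affect nonvanishing) is exactly the nontrivial content of the reduction lemmas of \cite{XZ}, \S3--4, which the paper's proof simply invokes rather than reproves. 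Your sketch labels this ``bookkeeping'' and defers it, so the proof is incomplete at its central step; moreover you ask for term-by-term equality of the alternating sums, which is stronger than what is needed (only the equivalence of nonvanishing of $c(\lambda,\mu)$ and of the reduced coefficient enters, and that, together with the regularity equivalence, is the statement the paper cites). The sign issue, incidentally, is harmless since $(-1)^{\ell(w_{\beta'})}=\det(w_{\beta'})$ is independent of the ambient system; it is the containment claims that need proof. As written, your argument becomes a proof only by importing the very invariance statements from \cite{XZ} that the paper's two-line proof quotes.
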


\begin{proof}
The invariant properties of Jantzen coefficients (\cite{XZ}, \S4) implies that $s_\beta\lambda$ is $\Phi_I$-regular if and only if $\lambda'=s_\beta(\lambda|_{\Phi(\beta)})=s_\beta\lambda|_{\Phi(\beta)}$ is $\Phi_I\cap\Phi(\beta)$-regular. There exist $\mu\in\Lambda_I^+$ (resp. $\mu'\in\Lambda^+(\lambda|_{\Phi(\beta)}, \Phi_I\cap\Phi(\beta), \Phi(\beta))$) and $w\in W_I$ (resp. $w'\in W(\Phi_I\cap\Phi(\beta))$) with $\mu=ws_\beta\lambda$ (resp. $\mu'=w's_\beta\lambda'$). Moreover, the Jantzen coefficient $c(\lambda, \mu)$ associated with $(\Phi_I, \Phi)$ is nonzero if and only if the Jantzen coefficients $c(\lambda', \mu')$ associated with $(\Phi_I\cap\Phi(\beta), \Phi(\beta))$ is nonzero.
\end{proof}

\begin{lemma}\label{bkconlem6}
Let $\Phi=D_n$ and $\beta\in\Phi^+\backslash\Phi_I$. Suppose that $\{e_{n-1}\pm e_n\}$ is not a subset of $I$. If $\lambda$ and $s_\beta\lambda$ are $\Phi_I$-regular, then $\beta$ is always a linked root from $\lambda$.
\end{lemma}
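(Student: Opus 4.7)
The plan is to reduce the statement to the classification of basic systems via Proposition~\ref{lrprop2}, and then rule out the ``bad'' basic systems using the hypothesis on $I$.

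First, I would fix $\beta \in \Phi^+ \setminus \Phi_I$ with both $\lambda$ and $s_\beta\lambda$ being $\Phi_I$-regular, and pass to the reduction $\lambda' := \lambda|_{\Phi(\beta)}$ inside the basic system $(\Phi(\beta), \Phi_I \cap \Phi(\beta), \Phi_\lambda \cap \Phi(\beta))$ provided by Lemma~\ref{lrlem1}. By Proposition~\ref{lrprop2}, it suffices to show that $\lambda'$ and $s_\beta\lambda'$ are linked by $\beta$ inside this basic system. Since $\Phi \subset D_n$, each $\Phi_i(\beta)$ in the reduction chain is a subsystem of $D_n$, so $\Phi(\beta)$ is an irreducible subsystem of $D_n$; the only possibilities are $\Phi(\beta) \cong A_k$ $(1 \le k \le n-1)$ or $\Phi(\beta) \cong D_k$ $(2 \le k \le n)$, where a $D_k$-subsystem is realized on a subset of coordinates.

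Next I would invoke the classification of basic systems in \cite{XZ} to decide when the Jantzen coefficient $c(\lambda', s_\beta\lambda')$ can vanish. In the type $A_k$ basic systems every $\beta \in \Psi_{\lambda'}^+$ is a linked root, so those cases are automatic. The only basic systems in which $\beta \in \Psi_{\lambda'}^+$ fails to be a linked root (while $s_\beta\lambda'$ remains $\Phi'_I$-regular) are those specific exceptional configurations inside $D_k$-basic systems tabulated in \cite{XZ}; the key feature of each of these exceptional configurations is that $\Phi'_I = \Phi_I \cap \Phi(\beta)$ must contain both short ``tail'' roots $\pm e_{k-1} \pm e_k$ of the $D_k$-subsystem (in the coordinate realization of $\Phi(\beta)$).

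The final step is to transfer this obstruction back to the ambient $D_n$. If $\Phi(\beta) \cong D_k$ is realized on coordinates $e_{i_1}, \ldots, e_{i_k}$ with $i_1 < \cdots < i_k$, then a presence of its ``tail pair'' $\{e_{i_{k-1}} \pm e_{i_k}\}$ inside $\Phi_I \cap \Phi(\beta)$ forces $\{e_{i_{k-1}} \pm e_{i_k}\} \subset I$, but any such pair in the root system $\Phi_I \subset D_n$ is $W_I$-conjugate to $\{e_{n-1} \pm e_n\}$ only when $i_k = n$, and inspection of $D_n$ shows that the forbidden pair must actually be $\{e_{n-1} \pm e_n\}$ itself (the orthogonality pattern of $e_{k-1} \pm e_k$ in $D_k$ forces $i_k = n$ and $i_{k-1} = n-1$). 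Since our hypothesis excludes this, none of the exceptional configurations occurs, and we conclude $c(\lambda', s_\beta\lambda') \neq 0$.

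The main obstacle will be the bookkeeping in the last step: making the coordinate identifications so that the ``bad tail pair'' $\{e_{i_{k-1}} \pm e_{i_k}\}$ of the reduced $D_k$-subsystem is forced to coincide with $\{e_{n-1} \pm e_n\}$ in the ambient $D_n$. This requires a careful case-by-case look at the exceptional basic systems of type $D$ in \cite{XZ} and an argument showing that the two roots of the tail pair, being orthogonal to everything else in $\Phi(\beta)$, must be the two extremal coordinate roots $\{e_{n-1}\pm e_n\}$ of the ambient $D_n$, since any other pair $\{e_i \pm e_j\}$ with $j < n$ fails to generate an orthogonal complement of the required rank inside $D_n$.
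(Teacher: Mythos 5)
The paper's own proof of this lemma is a one-line citation: by Lemma \ref{jflem3}, the assertion ``$\beta$ is a linked root'' is exactly the nonvanishing of a Jantzen coefficient, and that nonvanishing in type $D_n$ under the hypothesis $\{e_{n-1}\pm e_n\}\not\subset I$ is taken verbatim from Theorem 7.27 of \cite{XZ}. Your plan instead tries to re-derive this from the reduction to basic systems (Proposition \ref{lrprop2}) together with the classification of vanishing Jantzen coefficients for basic systems in \cite{XZ}. That is very likely how the cited theorem is itself proved, so the strategy is reasonable, but as written it does not close the argument: the decisive claim --- that the only type-$D_k$ basic systems in which a regularity-preserving $\beta$ fails to be linked are those where $\Phi_I\cap\Phi(\beta)$ contains a pair $\{e_{i_{k-1}}\pm e_{i_k}\}$ --- is asserted and deferred to ``inspection of \cite{XZ}''. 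Since this is exactly the content the paper outsources to Theorem 7.27, your write-up proves nothing beyond what the citation already gives, while taking on extra bookkeeping.

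More seriously, the transfer of that obstruction back to the hypothesis on $I$ is incorrect as argued. First, $\{e_{i_{k-1}}\pm e_{i_k}\}\subset\Phi_I\cap\Phi(\beta)$ does not ``force $\{e_{i_{k-1}}\pm e_{i_k}\}\subset I$'': $I$ is a set of simple roots of $D_n$, whereas the pair need only lie in the parabolic subsystem $\Phi_I$ it generates. Second, the two tail roots of a $D_k$-subsystem are not ``orthogonal to everything else in $\Phi(\beta)$'' (they pair nontrivially with $e_{i_{k-2}}-e_{i_{k-1}}$, etc.), and the rank/orthogonal-complement argument forcing $i_{k-1}=n-1$, $i_k=n$ fails outright: every pair $\{e_i\pm e_j\}$ has orthogonal complement of type $D_{n-2}$ in $D_n$, and indeed the offending pair need not be $\{e_{n-1}\pm e_n\}$ at all (if $\{\alpha_{n-2},\alpha_{n-1},\alpha_n\}\subset I$ then $\Phi_I$ already contains $\{e_{n-2}\pm e_{n-1}\}$). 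The correct bridge is elementary but different: a standard parabolic subsystem $\Phi_I$ of $D_n$ contains a pair $\{e_i-e_j,\,e_i+e_j\}$ for some $i<j$ if and only if both $\alpha_{n-1}=e_{n-1}-e_n$ and $\alpha_n=e_{n-1}+e_n$ lie in $I$; indeed, if $\alpha_n\notin I$ all roots of $\Phi_I$ have the form $e_a-e_b$, and if $\alpha_n\in I$ but $\alpha_{n-1}\notin I$ the component through $\alpha_n$ has positive roots only of the forms $e_a-e_b$ with $b\le n-1$ and $e_a+e_n$, never such a pair. With this observation (and the deferred classification actually carried out), your hypothesis would rule out the exceptional configurations; without it, the final step of your plan does not go through.
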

\begin{proof}
Keeping in mind of Lemma \ref{jflem3}, this lemma is an easy consequence of Theorem 7.27 in \cite{XZ}.
\end{proof}

%
%
\section{The algorithm}
%
%
Although we can run a computer program to determine the blocks of type $E$, it might take too much time when $|I|+|J|$ are too small. For example, if $I=J=\emptyset$, the category $\caO_\lambda^\frp$ has $696729600$ simple modules. This is beyond the processing ability of an ordinary PC. So we need to simplify to calculation. For $\Phi=E_n$ ($n=6, 7, 8$), let $\alpha_1, \cdots, \alpha_n$ be the standard ordering of simple roots (\cite{H1}, \S11.4).

\begin{lemma}\label{slem1}
Let $\Phi=E_n$ for $n=6, 7, 8$. Choose $I\subset\Delta\backslash\{\alpha_1, \alpha_2\}$ or $I\subset\Delta\backslash\{\alpha_1, \alpha_3\}$. Suppose that $\lambda, s_\beta\lambda$ are $\Phi_I$-regular weights with $\beta=e_i\pm e_j$ for $1\leq j<i\leq n-1$. Then $\lambda\lera s_\beta\lambda$.
\end{lemma}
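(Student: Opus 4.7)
The plan is to reduce via Proposition \ref{lrprop2} to the basic system $(\Phi(\beta),\Phi_I\cap\Phi(\beta))$ with parameter $\lambda|_{\Phi(\beta)}$, apply Lemma \ref{bkconlem6} there, and use Lemma \ref{bkconlem5} (together with the $W_I$-invariance of $\lera$ built into Definition \ref{jfdef3}) to upgrade the resulting linking to $\lambda\lera s_\beta\lambda$. Thus it suffices to verify that $\beta$ is a linked root from $\lambda|_{\Phi(\beta)}$ to $s_\beta\lambda|_{\Phi(\beta)}$ in the basic system, which by Lemma \ref{lrlem1} has integral parameter on an irreducible $\Phi(\beta)$ and satisfies $\rank(\Phi_I\cap\Phi(\beta))=\rank\Phi(\beta)-1$.

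Since $\beta=e_i\pm e_j$ with $1\le j<i\le n-1$ lies in the $D$-type subsystem of $E_n$ generated by $\{e_k\pm e_\ell\}$ in the first $n-1$ coordinates, and since the hypothesis forces $\Phi_I$ to sit inside an $A$- or $D$-type subsystem of $E_n$ (removing $\{\alpha_1,\alpha_2\}$ from $\Delta$ leaves an $A$-type subdiagram of $E_n$, while removing $\{\alpha_1,\alpha_3\}$ leaves a $D$-type one), the alternating intersections defining $\Phi(\beta)$ land in a subsystem of type $A$ or $D$. The type-$A$ case is immediate: every regular root in an $A$-type basic system is a linked root, as can be read off from the explicit formulas for Jantzen coefficients in \cite{XZ}. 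So we reduce to the case $\Phi(\beta)=D_m$.

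In the $D_m$ case, Lemma \ref{bkconlem6} requires that the two fork simple roots of $D_m$---the analogues of $\{e_{m-1}\pm e_m\}$---not both lie in $\Phi_I\cap\Phi(\beta)$. Via Humphreys's labeling of $E_n$ in \cite{H1}, one checks case by case that such a $D_m$-fork inside $\Phi_I\cap\Phi(\beta)$ can only be realized when its two nodes pull back to $\{\alpha_1,\alpha_2\}\subset I$ or $\{\alpha_1,\alpha_3\}\subset I$, both forbidden by hypothesis. Hence Lemma \ref{bkconlem6} applies, Proposition \ref{lrprop2} transports the linking back to $(\Phi_I,\Phi)$, and Lemma \ref{bkconlem5} yields $\lambda\lera s_\beta\lambda$. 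The main obstacle is precisely this fork analysis: enumerating which $D_m$-subsystems can arise as $\Phi(\beta)$ inside each of $E_6$, $E_7$, $E_8$, describing their fork roots concretely in terms of $E_n$-roots, and matching any candidate fork inside $\Phi_I\cap\Phi(\beta)$ to one of the simple-root configurations excluded by the hypothesis. It is combinatorial bookkeeping in the three relevant Dynkin diagrams rather than a conceptual difficulty.
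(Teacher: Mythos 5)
Your toolkit (Proposition \ref{lrprop2} together with Lemma \ref{bkconlem6}) is the right one, but your execution has a genuine gap exactly at its crux, and the combinatorial facts you lean on are partly wrong. First, removing $\{\alpha_1,\alpha_3\}$ from $\Delta$ does \emph{not} leave a $D$-type subdiagram: the remaining nodes $\alpha_2,\alpha_4,\dots,\alpha_n$ form a chain of type $A_{n-2}$ (only removing $\alpha_1$ alone gives $D_{n-1}$). Second, and more seriously, your entire $D_m$ case rests on the assertion that a fork of $\Phi(\beta)$ inside $\Phi_I\cap\Phi(\beta)$ ``can only be realized when its two nodes pull back to $\{\alpha_1,\alpha_2\}\subset I$ or $\{\alpha_1,\alpha_3\}\subset I$,'' which you neither formulate precisely nor verify: the fork roots of $\Phi(\beta)$ need not be simple roots of $E_n$ at all, and $\Phi(\beta)$ depends on $\lambda$ through $\Phi_\lambda$, so ``enumerating which $D_m$-subsystems arise'' is not routine bookkeeping---it is where the proof actually lives. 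What one would have to prove is the root-level statement that under either hypothesis $\Phi_I$ lies in the parabolic generated by $\Delta\setminus\{\alpha_1,\alpha_2\}$ (roots $\pm(e_i-e_j)$, $1\le j<i\le n-1$) or by $\Delta\setminus\{\alpha_1,\alpha_3\}$ (roots $\pm(e_1+e_i)$ and $\pm(e_i-e_j)$ with $2\le j<i\le n-1$), and neither set contains a pair $\{e_a-e_b,\,e_a+e_b\}$, so no coordinate fork of any $D_m$ can sit inside $\Phi_I\cap\Phi(\beta)$. Your type-$A$ case is likewise only cited vaguely to \cite{XZ} rather than argued.

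The paper's proof avoids this case analysis by not reducing first. Since $\alpha_1\notin I$ and $\beta=e_i\pm e_j$ with $i,j\le n-1$, both $\Phi_I$ and $\beta$ lie in $\Phi':=\Phi_{\Delta\setminus\{\alpha_1\}}\simeq D_{n-1}$, whose fork pair in the sense of Lemma \ref{bkconlem6} is $\{\alpha_2,\alpha_3\}$; the hypothesis says precisely that this pair is not contained in $I$, so Lemma \ref{bkconlem6} applies directly to $(\Phi_I,\Phi')$ and $\lambda'=\lambda|_{\Phi'}$, giving that $\beta$ is a linked root from $\lambda'$. Then, because $\bbQ\Phi_I+\bbQ\beta\subset\bbQ\Phi'$ and the roots of $E_n$ lying in $\bbQ\Phi'$ are exactly $\Phi'$, the reduction chains computed in $\Phi$ and in $\Phi'$ coincide ($\Phi_{\beta,1}=\Phi'_{\beta,1}$, hence $\Phi(\beta)=\Phi'(\beta)$), so Proposition \ref{lrprop2} transfers linkedness from $(\Phi_I,\Phi')$ to $(\Phi_I,\Phi)$ and $\lambda\lera s_\beta\lambda$ follows. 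If you wish to keep your route, you must supply the fork analysis indicated above together with the type-$A$ statement; otherwise pass through $\Phi'$ as the paper does (and do not forget the trivial case $\beta\in\Phi_I$, where $s_\beta\lambda\in W_I\lambda$).
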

\begin{proof}
This is obvious when $\beta\in\Phi_{I}$. If $\beta\not\in\Phi_I$, set $\Phi':=\Phi_{\Delta'}\simeq D_{n-1}$ and $\lambda'=\lambda|_{\Phi'}$, where $\Delta'=\Delta\backslash\{\alpha_1\}$. Lemma \ref{bkconlem6} implies that $\beta$ is linked root from $\lambda¡¯$. With $\alpha_1\not\in I$, one has $\Phi_{I'}=\Phi_I\cap\Phi'=\Phi_I$. It follows that $\Phi_{\beta, 1}=\Phi'_{\beta, 1}$ and $\Phi(\beta)=\Phi'(\beta)$. In view of Proposition \ref{lrprop2}, $\beta$ is also a linked root from $\lambda$ and thus $\lambda\lera s_\beta\lambda$.
\end{proof}

\begin{lemma}\label{slem2}
Let $\Phi=E_8$. Assume that $|I|+|J|\leq 7$ and $|I|\leq 3$. Suppose that $(\Phi, \Phi_I, \Phi_J)$ contains only one block. If $I'\subset I$, then $(\Phi, \Phi_{I'}, \Phi_{J})$ has only one block.
\end{lemma}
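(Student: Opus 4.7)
The plan is to combine the transfer principle of Lemma \ref{bkconlem0} with the direct connectivity supplied by Lemma \ref{slem1}, reducing the problem to showing that reflections by simple roots in $I \setminus I'$ always yield $\Ext^1$-links in the refined setting $(\Phi_{I'},\Phi)$.

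First I would fix a dominant weight $\overline\lambda$ with $\Phi_{\overline\lambda} = \Phi_J$. By Lemma \ref{bkconlem0}, every pair $\lambda, \mu \in \Lambda_I^+$ that is $\lera$-connected relative to $(\Phi_I,\Phi)$ remains $\lera$-connected relative to $(\Phi_{I'},\Phi)$. The hypothesis that $(\Phi,\Phi_I,\Phi_J)$ forms a single block therefore places the subset $\{w\overline\lambda : w \in {}^I W^J\}$ of simple objects of $\caO_{\overline\lambda}^{\frp_{I'}}$ in one $\lera$-equivalence class, which I will denote $\caC$. It remains to show that every simple $L(w\overline\lambda)$ with $w \in {}^{I'} W^J \setminus {}^I W^J$ also lies in $\caC$.

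Given such a $w$, the double coset decomposition $W = W_I \cdot {}^I W^J \cdot W_J$ yields $u \in W_I$ and $v \in {}^I W^J$ with $w\overline\lambda = uv\overline\lambda$; since $v\overline\lambda \in \caC$, the target becomes $uv\overline\lambda \lera v\overline\lambda$ relative to $(\Phi_{I'},\Phi)$. Choosing a reduced expression $u = s_{\beta_1}\cdots s_{\beta_k}$ with $\beta_i \in I$, I would build the chain $\mu_0 := v\overline\lambda$, $\mu_j := s_{\beta_{k-j+1}}\cdots s_{\beta_k} v\overline\lambda$ for $1 \leq j \leq k$, and verify $\mu_{j-1} \lera \mu_j$ relative to $(\Phi_{I'},\Phi)$ at each step. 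When $\beta_{k-j+1} \in I'$ the link is immediate; otherwise $\beta_{k-j+1} \in I \setminus I'$, and I would invoke Lemma \ref{slem1}. The hypotheses $|I| \leq 3$ and $|I|+|J| \leq 7$ ensure that $\Delta \setminus (I' \cup J)$ is nonempty, so after applying the $W$-conjugation invariance of Lemma \ref{jflem5} one may assume $I'$ sits inside $\Delta \setminus \{\alpha_1,\alpha_2\}$ or $\Delta \setminus \{\alpha_1,\alpha_3\}$; then Lemma \ref{slem1} delivers the required $\lera$-link whenever $\beta_{k-j+1}$ is of the form $e_i \pm e_j$.

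The hard part will be twofold. First, one must ensure the intermediate weights $\mu_j$ remain $\Phi_{I'}$-regular throughout the chain so that Lemma \ref{slem1} is applicable at every step; this should be secured by choosing the reduced expression for $u$ so that each prefix $s_{\beta_{k-j+1}}\cdots s_{\beta_k} v$ still lies in ${}^{I'} W^J$, a property enforceable through the subword property for the Bruhat order. Second, the exceptional simple root $\alpha_1$ (which Lemma \ref{slem1} does not cover) may appear among the $\beta_i$; since $|I| \leq 3$, only a small number of configurations involve this root, and they can be dispatched either by a more careful conjugation of $I'$ via Lemma \ref{jflem5}, or, if necessary, by a direct computation of the associated basic system through Proposition \ref{lrprop2} and Lemma \ref{lrlem1}.
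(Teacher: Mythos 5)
There is a genuine gap, and it sits exactly at the point your argument treats as routine: the double coset step. For $w\in{}^{I'}W^J\setminus{}^IW^J$ you write $w\overline\lambda=uv\overline\lambda$ with $u\in W_I$ and $v\in{}^IW^J$. But every weight of the form $uv\overline\lambda$ with $v\in{}^IW^J$ is $\Phi_I$-regular, whereas the new simple modules appearing when one shrinks $I$ to $I'$ include precisely those $L(w\overline\lambda)$ for which $w\overline\lambda$ is $\Phi_{I'}$-regular but $\Phi_I$-\emph{singular} (equivalently $w\Phi_J\cap\Phi_I\neq\emptyset$); for such $w$ the minimal representative of $W_IwW_J$ does \emph{not} lie in ${}^IW^J$ as defined in the paper, and your chain never gets started. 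These singular weights are the whole difficulty of the lemma: the paper's proof is devoted to the claim that every $\Phi_{I'}$-regular weight in the orbit can be linked, relative to $(\Phi_{I'},\Phi)$, to some $\Phi_I$-regular weight in $\Lambda_I^+$, and this is established by an explicit coordinate argument (reflections of the form $s_{e_i-e_7}$ and $s_{e_{i_4}+e_{i_l}}$ combined with Lemma \ref{slem1}) that depends on counting the distinct values $|\lambda_i|$; the hypotheses $|I|\le 3$ and $|I|+|J|\le 7$ enter exactly there, via $\rank\Phi_\lambda=\rank\Phi_J\le 4$, which guarantees enough distinct coordinates to make four entries pairwise distinct. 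Your proposal never uses these hypotheses in an essential way, which is a warning sign: without them the statement is false --- e.g.\ $(E_8,D_5,D_4)$ is a single block while $(E_8,D_4,D_4)$ decomposes into two blocks, so an argument that ignores the size restrictions cannot be correct.

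The part of your plan that does work --- transporting connectivity from $(\Phi_I,\Phi)$ to $(\Phi_{I'},\Phi)$ via Lemma \ref{bkconlem0}, and joining $W_I$-translates of a $\Phi_I$-regular weight by $\Ext^1$-links using Lemma \ref{slem1} (after conjugating $I$ by Lemma \ref{jflem5} so that $\alpha_1$ does not occur, which also removes your worry about that root) --- is essentially the easy half of the paper's argument. What is missing is the bridge from an arbitrary $\Phi_{I'}$-regular, possibly $\Phi_I$-singular, weight to the $\Phi_I$-regular ones; supplying it amounts to reproving the paper's claim, so the gap is not cosmetic.
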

\begin{proof}
Since the case $I=\emptyset$ is trivial, we can assume that $1\leq |I|\leq 3$. This forces $\Phi_I\simeq A_3$, $A_2\times A_1$, $A_1^3$, $A_2$, $A_1^2$ or $A_1$. With Lemma \ref{jflem5}, it suffices to consider the cases $I=\{\alpha_6, \alpha_7, \alpha_8\}$, $\{\alpha_5, \alpha_7, \alpha_8\}$, $\{\alpha_4, \alpha_6, \alpha_8\}$, $\{\alpha_7, \alpha_8\}$, $\{\alpha_6, \alpha_8\}$ or $\{\alpha_8\}$.

For any $\Phi_{I'}$-regular weight with $\Phi_{\overline\lambda}=\Phi_J$, we claim there exists $\hat\lambda\in\Lambda_I^+$ so that $\hat\lambda\lera\lambda$ (relative to $(\Phi_{I'}, \Phi)$). The lemma is an easy consequence of the claim. In fact, given $\lambda, \mu\in{}^{I'}W^J\overline\lambda$, there exist $\Phi_I$-regular weights $\hat\lambda, \hat\mu\in\Lambda_I^+$ so that $\lambda\lera\hat\lambda$ and $\mu\lera\hat\mu$ (relative to $(\Phi_{I'}, \Phi)$). Since $(\Phi, \Phi_I, \Phi_J)$ contains only one block, one has $\hat\lambda\lera\hat\mu$ (relative to $(\Phi_{I}, \Phi)$). Applying Lemma \ref{bkconlem0}, we get $\lambda\lera\hat\lambda\lera\hat\mu\lera\mu$ (relative to $(\Phi_{I'}, \Phi)$), that is, $(\Phi, \Phi_{I'}, \Phi_{J})$ has only one block.

It remains to prove the claim, which can be achieved in a case-by-case fashion. We only discuss the case $I=\{\alpha_6, \alpha_7, \alpha_8\}=\{e_5-e_4, e_6-e_5, e_7-e_6\}$, while the reasoning for the other cases are similar. Set $\Phi':=\Phi_{\Delta'}$ and $\lambda'=\lambda|_{\Phi'}$, where $\Delta'=\Delta\backslash\{\alpha_1\}$. Note that
\[
\rank\Phi'_{\lambda'}=\rank\Phi_\lambda\cap\Phi'\leq \rank \Phi_\lambda=\rank\Phi_J\leq7-|I|=4
\]
in this case. Let $a_1>\cdots>a_m$ be all the different numbers in $\{|\lambda_i|\mid 1\leq i\leq 7\}$. Put $n_i=|\{1\leq i\leq 7\mid |\lambda_i|=a_i\}|$. If $a_m>0$, then $\Phi'_{\lambda'}\simeq A_{n_1-1}\times\cdots\times A_{n_m-1}$, where $A_0=1$. We obtain
\[
\rank\Phi'_{\lambda'}=(n_1-1)+\cdots+ (n_m-1)=7-m\leq 4,
\]
that is, $m\geq3$. If $a_m=0$ and $n_m=1$, similar reasoning shows $m\geq3$. If $a_m=0$ and $n_m\geq2$, then $\Phi'_{\lambda'}\simeq A_{n_1-1}\times\cdots\times D_{n_m}$ and $m\geq4$, where $D_2\simeq A_1\times A_1$ and $D_3\simeq A_3$.

Set $\mu=\lambda$ and $S_\mu=\{|\mu_i|\mid 4\leq i\leq 7\}$. If $a_1\in S_\mu$, there exists $4\leq i_1\leq 7$ so that $|\mu_{i_1}|=a_1$. Otherwise we can find $1\leq i\leq 3$ with $|\mu_i|=a_1$. Thus $s_{e_i-e_7}\mu$ is $\Phi_{I'}$-regular (since $a_1\not\in S_\mu$). Replacing $\mu$ by $s_{e_i-e_7}\mu$, we get $\lambda\lera\mu$ (relative to $(\Phi_{I'}, \Phi)$) and $|\mu_7|=a_1$ ($i_1=7$). In a similar spirit, one can eventually get $\Phi_{I'}$-regular weight $\mu\lera\lambda$ and $|\mu_{i_j}|=a_j$ for $4\leq i_j\leq 7$ and $1\leq j\leq \min\{m, 4\}$. If $\mu_4, \cdots, \mu_7$ are not distinct, one must have $m=3$ and $\mu_{i_4}=\mu_{i_j}=\pm a_j\neq0$ for some $1\leq j\leq 3$, where $i_4\in\{4, 5, 6, 7\}\backslash\{i_1, i_2, i_3\}$. Choose $l\in\{1, 2, 3\}\backslash\{j\}$. Replace $\mu$ by $s_{e_{i_4}+e_{i_l}}\mu$. Then $\mu_4, \cdots, \mu_7$ are distinct and $\lambda\lera\mu$ (relative to $(\Phi_{I'}, \Phi)$). Now we can find $w\in W_I$ so that $w\mu\in\Lambda_I^+$. Since $\mu_4, \cdots, \mu_7$ are distinct, Lemma \ref{slem1} yields $\mu\lera w\mu$ (relative to $(\Phi_{I'}, \Phi)$). Set $\hat\lambda=w\mu$.

\end{proof}

Keeping in mind of Remark \ref{bkconrmk1}, the proof of the following two results are similar and easier.

\begin{lemma}\label{relem2}
Let $\Phi=E_7$. Assume that $|I|+|J|\leq 6$ and $|I|\leq 3$. Suppose that $(\Phi, \Phi_I, \Phi_J)$ contains only one block. If $I'\subset I$, then $(\Phi, \Phi_{I'}, \Phi_{J})$ has only one block.
\end{lemma}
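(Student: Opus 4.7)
The plan is to mirror the proof of Lemma \ref{slem2} with the rank reduced from $E_8$ to $E_7$. The argument is genuinely easier because the only nontrivial new ingredient is the $W$-conjugacy subtlety of Remark \ref{bkconrmk1}, and the number of coordinates to juggle is smaller.

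First I would reduce to finitely many cases. By Lemma \ref{jflem5}, the block structure relative to $(\Phi_I,\Phi)$ depends only on the $W$-conjugacy class of $\Phi_I$, so we may fix a representative from each class. With $|I|\leq 3$, the possible types of $\Phi_I$ are $\emptyset, A_1, A_1^2, A_1^3, A_2, A_2\times A_1, A_3$. By Remark \ref{bkconrmk1}, in $E_7$ the type $A_1^3$ splits into two $W$-classes $\Phi'_I$ and $\Phi''_I$; all other types in the list above yield a single $W$-class, so we pick one standard representative in each case (with two choices for $A_1^3$).

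Second, exactly as in Lemma \ref{slem2}, I would reduce the statement to the following claim: for every $\Phi_{I'}$-regular weight $\lambda$ with $\Phi_{\overline\lambda}=\Phi_J$, there exists $\hat\lambda\in\Lambda_I^+$ with $\hat\lambda\lera\lambda$ relative to $(\Phi_{I'},\Phi)$. Granting the claim, any pair $\lambda,\mu\in {}^{I'}W^J\overline\lambda$ can be moved to $\Phi_I$-regular representatives $\hat\lambda,\hat\mu\in\Lambda_I^+$; the single-block hypothesis on $(\Phi,\Phi_I,\Phi_J)$ gives $\hat\lambda\lera\hat\mu$ relative to $(\Phi_I,\Phi)$, and Lemma \ref{bkconlem0} transfers this down to $\lambda\lera\hat\lambda\lera\hat\mu\lera\mu$ relative to $(\Phi_{I'},\Phi)$, proving the lemma.

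Third, I would verify the claim by the same coordinate-swap strategy used in Lemma \ref{slem2}, applied to the subsystem $\Phi'=\Phi_{\Delta\setminus\{\alpha_1\}}\simeq D_6$ inside $E_7$. Setting $\lambda'=\lambda|_{\Phi'}$, one has
\[
\rank \Phi'_{\lambda'}\leq \rank \Phi_\lambda=\rank \Phi_J\leq 6-|I|\leq 3.
\]
Let $a_1>\cdots>a_m$ be the distinct values among $\{|\lambda_i|\mid 1\leq i\leq 6\}$ with multiplicities $n_i$. The type of $\Phi'_{\lambda'}$ (product of $A_{n_i-1}$'s, with a $D_{n_m}$ factor in the vanishing case) together with the rank bound above forces $m$ to be large enough that the largest absolute values can be moved, one at a time, into the $D_6$-range. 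Then, invoking Lemma \ref{bkconlem6} for reflections of type $e_i\pm e_j$ and Lemma \ref{slem1} for the $E_7$-version of the sweep, I would perform the same sequence of reflections $s_{e_i\pm e_j}$, each preserving the $(\Phi_{I'},\Phi)$-equivalence class, to transport $\lambda$ to a weight $\hat\lambda\in\Lambda_I^+$.

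The main obstacle is the treatment of the non-$W$-conjugate $A_1^3$ configurations, since one of them does not sit inside the $D_6$ subsystem $\Delta\setminus\{\alpha_1\}$ used in the sweep. Handling this case may require choosing a different ambient $D_k$-subsystem (or a preliminary conjugation by Lemma \ref{jflem5} together with a careful tracking of which reflections still give linked roots under Lemma \ref{bkconlem6} and Proposition \ref{lrprop2}) so that all intermediate weights remain $\Phi_{I'}$-regular. Once this bookkeeping is settled, the rest of the case analysis is a strictly simpler version of the $E_8$ argument and poses no new difficulty.
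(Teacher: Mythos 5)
Your proposal is correct and follows essentially the same route as the paper: the paper's own proof of this lemma is a one-sentence assertion that the argument of Lemma \ref{slem2} carries over (``similar and easier''), keeping Remark \ref{bkconrmk1} in mind, which is exactly the reduction via Lemma \ref{jflem5} and Lemma \ref{bkconlem0} plus the coordinate-sweep in $\Phi_{\Delta\setminus\{\alpha_1\}}\simeq D_6$ that you spell out. Two small caveats, neither affecting the approach: the chain $\rank\Phi_J\le 6-|I|\le 3$ holds verbatim only when $|I|=3$ (for smaller $|I|$ the bound is weaker but fewer coordinates need to be separated, as in the $E_8$ case), and both $W$-classes of $A_1^3$ in $E_7$ admit representatives inside $\Delta\setminus\{\alpha_1,\alpha_2\}$ or $\Delta\setminus\{\alpha_1,\alpha_3\}$, so the obstacle you flag at the end does not actually arise.
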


\begin{lemma}\label{relem3}
Let $\Phi=E_6$. Assume that $|I|+|J|\leq 5$ and $|I|\leq 2$. Suppose that $(\Phi, \Phi_I, \Phi_J)$ contains only one block. If $I'\subset I$, then $(\Phi, \Phi_{I'}, \Phi_{J})$ has only one block.
\end{lemma}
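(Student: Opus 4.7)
The plan is to mimic the proof of Lemma~\ref{slem2}, specialized to the simpler $E_6$ setting. The case $I = \emptyset$ is vacuous, so one may assume $1 \leq |I| \leq 2$; the possible isomorphism types of $\Phi_I$ are then $A_2$, $A_1 \times A_1$, or $A_1$. Unlike the $E_7$ situation recorded in Remark~\ref{bkconrmk1}, every parabolic Levi subsystem of $E_6$ of a given type lies in a single $W$-conjugacy class, so Lemma~\ref{jflem5} permits replacing $I$ by a representative whose simple roots all lie in $\Delta\setminus\{\alpha_1\}$. With such a choice, $\Phi_I$ sits inside the $D_5$ subsystem $\Phi' := \Phi_{\Delta\setminus\{\alpha_1\}}$, which is the analogue of the $D_7$ subsystem used in the $E_8$ argument.

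The central assertion, analogous to the claim in Lemma~\ref{slem2}, is: for every $\Phi_{I'}$-regular weight $\lambda$ with $\Phi_{\overline\lambda}=\Phi_J$, there exists $\hat\lambda\in\Lambda_I^+$ such that $\lambda\lera\hat\lambda$ relative to $(\Phi_{I'},\Phi)$. Once this is granted, the lemma follows verbatim from the argument in Lemma~\ref{slem2}: for any two weights $\lambda,\mu$ in the same block of $(\Phi,\Phi_{I'},\Phi_J)$ one produces $\hat\lambda,\hat\mu\in\Lambda_I^+$ with $\lambda\lera\hat\lambda$ and $\mu\lera\hat\mu$; the hypothesis that $(\Phi,\Phi_I,\Phi_J)$ is a single block gives $\hat\lambda\lera\hat\mu$ relative to $(\Phi_I,\Phi)$; Lemma~\ref{bkconlem0} upgrades this to the same relation relative to $(\Phi_{I'},\Phi)$; and concatenation yields $\lambda\lera\mu$.

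To establish the claim, set $\lambda' := \lambda|_{\Phi'}$ and use the inequality
\[
\rank\Phi'_{\lambda'} = \rank(\Phi_\lambda\cap\Phi') \leq \rank\Phi_\lambda = \rank\Phi_J \leq 5-|I|.
\]
Letting $a_1>\cdots>a_m$ enumerate the distinct values in $\{|\lambda_i|\}_{i=1}^5$ with multiplicities $n_i$, the same trichotomy as in Lemma~\ref{slem2} (the case $a_m>0$ giving $\Phi'_{\lambda'}\simeq A_{n_1-1}\times\cdots\times A_{n_m-1}$; the case $a_m=0$, $n_m=1$ giving a similar product; the case $a_m=0$, $n_m\geq2$ giving a trailing $D_{n_m}$ factor) forces $m$ to be small enough that the coordinates admit enough repetitions or zeros to be rearranged. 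Iteratively reflecting in roots of the form $e_i\pm e_j$ with $1\leq j<i\leq 5$, each of which is an $\Ext^1$-link by Lemma~\ref{slem1} (applicable precisely because $\alpha_1\notin I$), one first makes $|\lambda_1|,\ldots,|\lambda_5|$ pairwise distinct whenever forced and then applies a final element of $W_I$ — whose constituent reflections are again linked roots by Lemma~\ref{slem1} — to push $\lambda$ into $\Lambda_I^+$.

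The main obstacle is the case-by-case bookkeeping required to check that every intermediate weight in the reduction remains $\Phi_{I'}$-regular; a careless coordinate swap could create a coincidence violating regularity. This is controlled entirely by the rank bound above, which guarantees enough slack in the multiset $\{|\lambda_i|\}$. Because $E_6$ involves only five relevant coordinate indices (after excluding $\alpha_1$) and only three Levi types to check — versus the seven indices and six types handled in Lemma~\ref{slem2} — the argument is strictly shorter than in the $E_8$ case, and no ingredient beyond those already used there is required.
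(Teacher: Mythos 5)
Your proposal follows exactly the route the paper intends: its own ``proof'' of this lemma is just the remark that the argument of Lemma~\ref{slem2} carries over (``similar and easier''), and your reduction via Lemma~\ref{jflem5} to a conjugate representative of $\Phi_I$ inside the $D_5$ subsystem $\Phi_{\Delta\setminus\{\alpha_1\}}$, the claim that every $\Phi_{I'}$-regular weight in the orbit is $\Ext^1$-connected to some $\hat\lambda\in\Lambda_I^+$, and the concluding use of Lemmas~\ref{bkconlem0} and~\ref{slem1} are precisely the ingredients of that argument. Two small touch-ups: Lemma~\ref{slem1} requires $I$ (hence $I'$) to avoid $\alpha_1$ together with one of $\alpha_2,\alpha_3$, not merely $\alpha_1$ (for $|I|\le 2$ such a representative always exists, e.g.\ inside $\{\alpha_2,\alpha_4,\alpha_5,\alpha_6\}$), and the rank inequality is used to bound $m$ from \emph{below} --- it guarantees enough distinct values among the $|\lambda_i|$ to make the coordinates at the $I$-positions pairwise distinct, rather than ``enough repetitions''.
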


\subsection{The algorithm} In this subsection, we will provide an algorithm to find data of the system $(\Phi, \Phi_I, \Phi_J)$.

First consider $\Phi=E_8$. Let $\varpi_J=\sum_{\alpha_j\not\in J}\varpi_j$, where $\varpi_1, \ldots, \varpi_8$ are fundament weights. It is a dominant weight with $\Phi_{\varpi_J}=\Phi_J$. We need to find all the integral weights $\lambda=w\varpi_J$ with $w\in{}^IW^J$. Assume that $\lambda=\sum_{i=1}^8x_i\varpi_k$ for $x_i\in\bbZ$. In particular, $x_i\in \bbZ^{>0}$ when $\alpha_i\in I$ (since $\lambda\in\Lambda_I^+$). Changing basis,
\[
\begin{aligned}
\lambda=&\frac{x_2-x_3}{2}e_1+\frac{x_2+x_3}{2}e_2+(\frac{x_2+x_3}{2}+x_4)e_3+(\frac{x_2+x_3}{2}+x_4+x_5)e_4\\
&+(\frac{x_2+x_3}{2}+x_4+x_5+x_6)e_5+(\frac{x_2+x_3}{2}+x_4+x_5+x_6+x_7)e_6\\
&+(\frac{x_2+x_3}{2}+x_4+x_5+x_6+x_7+x_8)e_7\\
&+(2x_1+\frac{5x_2+7x_3}{2}+5x_4+4x_5+3x_6+2x_7+x_8)e_8.
\end{aligned}
\]
With $\langle\lambda, \lambda\rangle=\langle w\varpi_J, w\varpi_J\rangle=\langle \varpi_J, \varpi_J\rangle$, $x_i$ ($1\leq i\leq 8$) are integral solutions (with $x_i>0$ for $\alpha_i\in I$) of the equation
\[
\begin{aligned}
\langle \varpi_J, \varpi_J\rangle=&(\frac{x_2-x_3}{2})^2+(\frac{x_2+x_3}{2})^2+(\frac{x_2+x_3}{2}+x_4)^2+(\frac{x_2+x_3}{2}+x_4+x_5)^2\\
&+(\frac{x_2+x_3}{2}+x_4+x_5+x_6)^2+(\frac{x_2+x_3}{2}+x_4+x_5+x_6+x_7)^2\\
&+(\frac{x_2+x_3}{2}+x_4+x_5+x_6+x_7+x_8)^2\\
&+(2x_1+\frac{5x_2+7x_3}{2}+5x_4+4x_5+3x_6+2x_7+x_8)^2.
\end{aligned}
\]

The algorithm is summarized as follows.
\begin{itemize}
\item [(1)] Choose $I, J\subset \Delta$ with $|I|+|J|\geq 7$.

\item [(2)] Find all the integral solutions $(x_1, \cdots, x_8)$ of the above equations with $x_i>0$ for $\alpha_i\in I$ using enumeration method.

\item [(3)] For each solution $(x_1, \cdots, x_8)$ obtained in (2), write $\mu=\lambda=\sum_{i=1}^8x_i\varpi_i$. If $\langle\lambda, \alpha\rangle<0$ for some $\alpha\in\Delta$, replace $\mu$ by $s_\alpha\mu$. We eventually arrive at a dominant weight $\mu$. If $\mu=\varpi_J$, keep the solution; otherwise it is discarded.

\item [(4)] Let $\lambda^1, \cdots, \lambda^N$ be all the vector solutions obtained in (3) with $i<j$ whenever $\langle\lambda^i, \varpi_I\rangle>\langle\lambda^j, \varpi_I\rangle$. Set $a(\lambda^i)=i$. First take $\lambda=\lambda^1$. For any $\beta\in\Psi_\lambda^{++}$, we can find $\mu\in\Lambda_I^+$ so that $\mu=ws_\beta\lambda$ for some $w\in W_I$. Then $\mu$ is also a vector solution of (3). If $c(\lambda, \mu)\neq0$ and $a(\lambda)\neq a(\mu)$, set $a(\lambda^i)=\min\{a(\lambda), a(\mu)\}$ for any $1\leq i\leq N$ with $a(\lambda^i)=a(\lambda)$ or $a(\mu)$. Next take $\lambda=\lambda^2, \ldots, \lambda^N$ sequentially and iterate the above procedure.

\item [(5)] When step (4) is finished, let $S$ be the set of all the different $a(\lambda^i)$. Thus the system contains $|S|$ blocks. If $|S|>1$, we output vectors of each block. Otherwise we output the number $N$.
\end{itemize}

It turns out that one always has $|S|=1$ when $|I|+|J|=7$. The following result is an immediate consequence of Lemma \ref{slem2}.

\begin{lemma}\label{aglem1}
Let $\Phi=E_8$. Assume that $|I|+|J|\leq 7$. Then $(\Phi, \Phi_I, \Phi_{J})$ contains only one block.
\end{lemma}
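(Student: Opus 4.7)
The plan is to reduce every system with $|I|+|J|\leq 7$ to the base case $|I|+|J|=7$, which the algorithm has just confirmed to be one block in every instance, using Lemma \ref{slem2} together with the parabolic-singular duality of Lemma \ref{jflem4}.

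By Lemma \ref{jflem4}, which preserves the block decomposition under the swap $I\leftrightarrow J$, I may assume $|I|\leq|J|$, and hence $|I|\leq 3$. Then I would choose an enlargement $\hat{I}\supset I$ with $|\hat{I}|=7-|J|$, which is possible precisely because $|I|\leq|\hat{I}|=7-|J|$ is exactly the hypothesis $|I|+|J|\leq 7$. In the main sub-case $|J|\geq 4$ one has $|\hat{I}|\leq 3$, so Lemma \ref{slem2} applied to the one-block system $(\Phi,\Phi_{\hat{I}},\Phi_J)$ immediately shrinks $\hat{I}$ down to $I$ and gives that $(\Phi,\Phi_I,\Phi_J)$ is one block.

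The remaining sub-case is $|I|,|J|\leq 3$ with $|I|+|J|\leq 6$, where neither direct extension of $I$ alone nor of $J$ alone to total size $7$ keeps that side at size $\leq 3$. For this I would extend both to $(\hat{I},\hat{J})$ with $|\hat{I}|=4$ and $|\hat{J}|=3$, invoke the base case on $(\Phi,\Phi_{\hat{I}},\Phi_{\hat{J}})$, dualize via Lemma \ref{jflem4} so that $\hat{J}$ occupies the first slot, use Lemma \ref{slem2} to shrink $\hat{J}$ to $J$, and dualize back. This yields $(\Phi,\Phi_{\hat{I}},\Phi_J)$ one block with $|\hat{I}|=4$ and $|J|\leq 3$, and the final step is to shrink $\hat{I}$ down to $I$.

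The main obstacle I anticipate is precisely this last reduction from $|\hat{I}|=4$ to $|I|\leq 3$: Lemma \ref{slem2} as stated requires the argument being shrunk to have size at most $3$. The expected finish is to extend the case-by-case reasoning in the proof of Lemma \ref{slem2} to the rank-$4$ Levi types arising in $E_8$, namely $A_4$, $A_3\times A_1$, $A_2\times A_1^2$, $A_2^2$, and $A_1^4$ (by the same argument on distinct absolute values that is used there), or else to verify the handful of remaining small systems by a direct run of the algorithm, which is computationally tractable since $|{}^IW^J|$ stays manageable when both $|I|$ and $|J|$ are close to $3$.
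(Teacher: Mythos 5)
Your overall route is the same as the paper's: the paper deduces this lemma from the computational fact that every system with $|I|+|J|=7$ is connected, combined with Lemma \ref{slem2} (with the duality of Lemma \ref{jflem4} implicit), and your first reduction --- assume $|I|\le|J|$ by duality, then for $|J|\ge 4$ enlarge $I$ to $\hat I$ of size $7-|J|\le 3$ and shrink back with Lemma \ref{slem2} --- is exactly that argument.

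The residual case you isolate, $|I|,|J|\le 3$, is a genuine issue, and it is not an artifact of your particular chain of reductions: since Lemma \ref{slem2} only allows shrinking a side whose size is at most $3$ \emph{before} shrinking, while every system with $|I|+|J|=7$ has one side of size at least $4$, no sequence of applications of Lemma \ref{slem2} and Lemma \ref{jflem4} starting from the sum-$7$ data can ever reach a system with both sides of size between $1$ and $3$. The paper does not address this either; it simply calls the lemma an immediate consequence of Lemma \ref{slem2}, so you have put your finger on a step the paper glosses over rather than overlooked an argument it actually contains. What keeps your proposal from being a complete proof is that the final shrink of the size-$4$ side is only sketched: you would need either to redo the case-by-case analysis of Lemma \ref{slem2} for the rank-$4$ Levi types (where the rank bound becomes $\rank\Phi_J\le 3$, and where you should note that $\hat I$ can always be chosen to avoid type $D_4$, whose unique realization $\{\alpha_2,\alpha_3,\alpha_4,\alpha_5\}\subset\Delta$ is incompatible with the $\Delta\setminus\{\alpha_1\}$-coordinate technique and Lemma \ref{slem1}, and that $A_1^4$ can be avoided by adding a node adjacent to $I$), or else to run the algorithm directly on the leftover systems with both $|I|,|J|\le 3$. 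Either completion is plausible, but as written your proposal stops exactly where the paper's own justification also stops.
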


The algorithms for type $E_7, E_6$ are similar, while the equation for type $E_7$ is
\[
\begin{aligned}
\langle \varpi_J, \varpi_J\rangle=&(\frac{x_2-x_3}{2})^2+(\frac{x_2+x_3}{2})^2+(\frac{x_2+x_3}{2}+x_4)^2+(\frac{x_2+x_3}{2}+x_4+x_5)^2\\
&+(\frac{x_2+x_3}{2}+x_4+x_5+x_6)^2+(\frac{x_2+x_3}{2}+x_4+x_5+x_6+x_7)^2\\
&+2(\frac{3x_3+3x_5+x_7}{2}+x_1+x_2+2x_4+x_6)^2
\end{aligned}
\]
and the equation for type $E_6$ is
\[
\begin{aligned}
\langle \varpi_J, \varpi_J\rangle=&(\frac{x_2-x_3}{2})^2+(\frac{x_2+x_3}{2})^2+(\frac{x_2+x_3}{2}+x_4)^2+(\frac{x_2+x_3}{2}+x_4+x_5)^2\\
&+(\frac{x_2+x_3}{2}+x_4+x_5+x_6)^2+3(\frac{x_2}{2}+\frac{2x_1+2x_5+x_6}{3}+\frac{5x_3}{6}+x_4)^2.
\end{aligned}
\]

We also have the following results.

\begin{lemma}\label{aglem2}
Let $\Phi=E_7$. Assume that $|I|+|J|\leq 6$. Then $(\Phi, \Phi_I, \Phi_{J})$ contains only one block.
\end{lemma}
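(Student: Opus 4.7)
The plan is to mirror the approach used for Lemma \ref{aglem1}: first verify the boundary case $|I|+|J|=6$ by executing the enumeration algorithm of Section 4.1 for $\Phi=E_7$ (with the $E_7$ fundamental-weight equation displayed above), and then propagate the one-block conclusion to every case with $|I|+|J|\leq 5$ using Lemma \ref{relem2} together with the parabolic-singular duality in Lemma \ref{jflem4}.

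For the boundary step, I would run steps (1)--(5) of the algorithm on each pair $(I,J)$ with $|I|+|J|=6$. By Lemma \ref{jflem5} it suffices to take one representative from each $W$-conjugacy class of such pairs, and Remark \ref{bkconrmk1} signals the types ($A_5$, $A_3\times A_1$, $A_1^3$) for which the $W$-conjugacy splits and two separate runs are required. In every instance the expected output is $|S|=1$; the associated $N$-values feed into the tables of Section 6. Tractability is not a serious issue, since for $E_7$ at level $6$ the enumerations are strictly smaller than those already handled for $E_8$ at level $7$.

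To pass from the boundary to arbitrary $(I',J)$ with $|I'|+|J|\leq 6$, I would split into three cases. If $|I'|\leq 3$ and $|J|\leq 3$, choose any $I_0\supset I'$ and $J_0\supset J$ with $|I_0|=|J_0|=3$; the boundary result delivers a single block for $(\Phi,\Phi_{I_0},\Phi_{J_0})$; Lemma \ref{relem2} collapses $I_0$ down to $I'$ (giving one block for $(\Phi,\Phi_{I'},\Phi_{J_0})$), Lemma \ref{jflem4} swaps the two slots to yield $(\Phi,\Phi_{J_0},\Phi_{I'})$, Lemma \ref{relem2} collapses $J_0$ down to $J$, and a second application of Lemma \ref{jflem4} returns to $(\Phi,\Phi_{I'},\Phi_J)$. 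If $|I'|\leq 3$ and $|J|\geq 4$, then $|I'|\leq 2$ automatically, and one takes $I_0\supset I'$ with $|I_0|=6-|J|$, so that $|I_0|+|J|=6$ and $|I_0|\leq 2\leq 3$, and Lemma \ref{relem2} applies directly. If $|I'|\geq 4$, then $|J|\leq 2$, and Lemma \ref{jflem4} reduces this to the previous case with the roles of $I'$ and $J$ interchanged.

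The main obstacle I anticipate is computational rather than conceptual: one has to actually execute the algorithm for every $W$-conjugacy class at the boundary and keep track of the non-conjugate copies flagged by Remark \ref{bkconrmk1}. All the theoretical ingredients---the reduction Lemma \ref{relem2}, the duality Lemma \ref{jflem4}, the conjugacy invariance Lemma \ref{jflem5}, and the integral-weight enumeration equation for $E_7$---are already in place, so once the boundary data are tabulated the remainder is a routine three-case bookkeeping argument that mirrors the $E_8$ reduction in Lemma \ref{aglem1}.
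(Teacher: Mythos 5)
Your proposal is correct and follows essentially the same route the paper intends (and leaves implicit): verify the boundary cases $|I|+|J|=6$ by the $E_7$ version of the enumeration algorithm, taking one representative per conjugacy class via Lemma \ref{jflem5} and treating separately the non-conjugate copies flagged in Remark \ref{bkconrmk1}, and then descend to $|I|+|J|\le 6$ using Lemma \ref{relem2} together with the duality of Lemma \ref{jflem4}, exactly as Lemma \ref{aglem1} is deduced from Lemma \ref{slem2}. Your explicit three-case bookkeeping is sound; in particular, for $E_7$ the worst case $|I'|=|J|=3$ lands precisely on the boundary sum $6$, so the hypothesis $|I|\le 3$ of Lemma \ref{relem2} is never violated in your reduction chain.
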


\begin{lemma}\label{aglem3}
Let $\Phi=E_6$. Assume that $|I|+|J|\leq 5$. Then $(\Phi, \Phi_I, \Phi_{J})$ contains only one block.
\end{lemma}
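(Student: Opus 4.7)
My plan is to mirror the short proof of Lemma \ref{aglem1} (and Lemma \ref{aglem2}) verbatim, replacing the $E_8$-specific ingredients by the $E_6$-specific ones.

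First, I would run the algorithm of the previous subsection, instantiated with the $E_6$ quadratic form displayed above, on every pair $(I,J)\subseteq\Delta\times\Delta$ with $|I|+|J|=5$. Invoking Lemma \ref{jflem5} to reduce modulo $W$-conjugacy, and Lemma \ref{jflem4} to exploit the $(I,J)\leftrightarrow(J,I)$ duality, it suffices to run the algorithm on one representative from each class, a small finite list. As in the $|I|+|J|=7$ remark preceding Lemma \ref{aglem1}, I expect step (5) of the algorithm to output $|S|=1$ in every case, so every $(\Phi,\Phi_I,\Phi_J)$ with $|I|+|J|=5$ consists of a single block.

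Second, I would invoke Lemma \ref{relem3} to propagate single-blockness downward to all $|I|+|J|\leq 5$. Given a target $(I',J)$ with $|I'|+|J|\leq 5$ and (after applying Lemma \ref{jflem4} if necessary) $|I'|\leq 2$, I enlarge $I'$ to some $I$ with $I'\subseteq I$, $|I|\leq 2$, and $|I|+|J|=5$, which is possible whenever $|J|\geq 3$. By the base step $(I,J)$ has one block, and Lemma \ref{relem3} then transports single-blockness from $(I,J)$ to $(I',J)$.

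For the residual situation where both $|I'|\leq 2$ and $|J|\leq 2$, a single enlargement inside the range of Lemma \ref{relem3} is unavailable. Here I would simply run the algorithm directly: since $|W(E_6)|=51840$ and $|\Phi^+|=36$, each of the finitely many remaining pairs is well within the reach of a straightforward enumeration of orbits and of the sets $\Psi_\lambda^{++}$ driving step (4).

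The main obstacle is logistical rather than conceptual: enumerating a complete set of $W$-conjugacy representatives via Lemma \ref{jflem5}, verifying that the algorithm's output is indeed $|S|=1$ in each base case, and certifying that the implementation correctly iterates the linked-root construction in step (4). The reduction lemma does all of the heavy lifting, and the $E_6$ computations that remain are small enough to be finished by brute enumeration.
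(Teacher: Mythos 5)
Your proposal is correct and follows essentially the same route as the paper: run the $E_6$ instance of the algorithm on the boundary cases $|I|+|J|=5$ (cutting down the list via the conjugacy and duality invariances of Lemmas \ref{jflem5} and \ref{jflem4}), observe that $|S|=1$ in each case, and then propagate single-blockness downward with the reduction Lemma \ref{relem3}, exactly as Lemma \ref{aglem1} is deduced from Lemma \ref{slem2}. Your explicit handling of the residual pairs with $|I|\leq 2$ and $|J|\leq 2$ by direct enumeration is a welcome extra precaution --- Lemma \ref{relem3} alone cannot reach these from the $|I|+|J|=5$ base, a point the paper leaves implicit, and for $E_6$ (where $|W|=51840$) the brute-force check you propose is indeed feasible.
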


%
%
\section{An example}
%
%
In this section, we give a typical example of system with two blocks. The result is obtained by the algorithm in the previous section. Let $\Phi=E_8$, $I=\{\alpha_2, \alpha_3, \alpha_4, \alpha_5, \alpha_6, \alpha_7\}$ and $J=\{\alpha_3, \alpha_4, \alpha_5, \alpha_7\}$. Thus $\Phi_I\simeq D_6$ and $\Phi_J\simeq A_3\times A_1$. Evidently, $\varpi_J=(\frac{1}{2}, \frac{1}{2}, \frac{1}{2}, \frac{1}{2}, \frac{3}{2}, \frac{3}{2}, \frac{5}{2}, \frac{17}{2})$. All the weights of ${}^IW^J\varpi_J$ are given in Table \ref{extb1}.

\renewcommand\arraystretch{1.3}
\begin{table}[htbp]
\begin{tabular}{|l|c|l|c|}
\hline
$1$ & $(0, 1, 2, 3, 4, 5, 2, 5)$ & $23$ & $(0, 1, 2, 3, 4, 7, 2, -1)$ \\
\hline
$2$ & $(0, 1, 2, 3, 4, 5, -2, 5)$ & $24$ & $(0, 1, 2, 3, 4, 5, 5, -2)$\\
\hline
$3$ & $(0, 1, 2, 3, 4, 6, 3, 3)$ & $25$ & $(-\frac{1}{2}, \frac{3}{2}, \frac{5}{2}, \frac{7}{2}, \frac{9}{2}, \frac{13}{2}, \frac{1}{2}, -\frac{1}{2})$\\
\hline
$4$ & $(\frac{1}{2}, \frac{3}{2}, \frac{5}{2}, \frac{7}{2}, \frac{9}{2}, \frac{11}{2}, \frac{1}{2}, \frac{7}{2})$ & $26$ & $(\frac{1}{2}, \frac{3}{2}, \frac{5}{2}, \frac{7}{2}, \frac{9}{2}, \frac{13}{2}, -\frac{1}{2}, -\frac{1}{2})$\\
\hline
$5$ & $(0, 1, 2, 3, 4, 5, 5, 2)$ & $27$ & $(0, 1, 2, 4, 5, 6, 1, -1)$\\
\hline
$6$ & $(-\frac{1}{2}, \frac{3}{2}, \frac{5}{2}, \frac{7}{2}, \frac{9}{2}, \frac{11}{2}, \frac{5}{2}, \frac{5}{2})$ & $28$ & $(-\frac{1}{2}, \frac{3}{2}, \frac{5}{2}, \frac{7}{2}, \frac{9}{2}, \frac{11}{2}, -\frac{7}{2}, \frac{1}{2})$\\
\hline
$7$ & $(-\frac{1}{2}, \frac{3}{2}, \frac{5}{2}, \frac{7}{2}, \frac{9}{2}, \frac{11}{2}, -\frac{1}{2}, \frac{7}{2})$ & $29$ & $(0, 1, 2, 3, 5, 6, -3, 0)$\\
\hline
$8$ & $(0, 1, 2, 3, 5, 6, 0, 3)$ & $30$ & $(0, 1, 2, 4, 5, 6, -1, -1)$\\
\hline
$9$ & $(0, 1, 2, 3, 4, 7, 1, 2)$ & $31$ & $(\frac{1}{2}, \frac{3}{2}, \frac{5}{2}, \frac{7}{2}, \frac{9}{2}, \frac{11}{2}, \frac{5}{2}, -\frac{5}{2})$\\
\hline
$10$ & $(0, 1, 2, 3, 4, 6, -3, 3)$ & $32$ & $(\frac{1}{2}, \frac{3}{2}, \frac{5}{2}, \frac{7}{2}, \frac{9}{2}, \frac{11}{2}, -\frac{7}{2}, \frac{1}{2})$\\
\hline
$11$ & $(\frac{1}{2}, \frac{3}{2}, \frac{5}{2}, \frac{7}{2}, \frac{9}{2}, \frac{11}{2}, \frac{7}{2}, \frac{1}{2})$ & $33$ & $(0, 1, 2, 3, 4, 7, 1, -2)$\\
\hline
$12$ & $(\frac{1}{2}, \frac{3}{2}, \frac{5}{2}, \frac{7}{2}, \frac{9}{2}, \frac{11}{2}, -\frac{5}{2}, \frac{5}{2})$ & $34$ & $(0, 1, 2, 3, 4, 7, -2, -1)$\\
\hline
$13$ & $(0, 1, 2, 3, 4, 7, 2, 1)$ & $35$ & $(0, 1, 2, 3, 4, 6, 3, -3)$\\
\hline
$14$ & $(0, 1, 2, 3, 4, 7, -1, 2)$ & $36$ & $(0, 1, 2, 3, 4, 7, -1, -2)$\\
\hline
$15$ & $(0, 1, 2, 4, 5, 6, 1, 1)$ & $37$ & $(0, 1, 2, 3, 5, 6, 0, -3)$\\
\hline
$16$ & $(0, 1, 2, 3, 5, 6, 3, 0)$ & $38$ & $(-\frac{1}{2}, \frac{3}{2}, \frac{5}{2}, \frac{7}{2}, \frac{9}{2}, \frac{11}{2}, \frac{1}{2}, -\frac{7}{2})$\\
\hline
$17$ & $(\frac{1}{2}, \frac{3}{2}, \frac{5}{2}, \frac{7}{2}, \frac{9}{2}, \frac{13}{2}, \frac{1}{2}, \frac{1}{2})$ & $39$ & $(-\frac{1}{2}, \frac{3}{2}, \frac{5}{2}, \frac{7}{2}, \frac{9}{2}, \frac{11}{2}, -\frac{5}{2}, -\frac{5}{2})$\\
\hline
$18$ & $(0, 1, 2, 4, 5, 6, -1, 1)$ & $40$ & $(\frac{1}{2}, \frac{3}{2}, \frac{5}{2}, \frac{7}{2}, \frac{9}{2}, \frac{11}{2}, -\frac{1}{2}, -\frac{7}{2})$\\
\hline
$19$ & $(-\frac{1}{2}, \frac{3}{2}, \frac{5}{2}, \frac{7}{2}, \frac{9}{2}, \frac{11}{2}, \frac{7}{2}, -\frac{1}{2})$ & $41$ & $(0, 1, 2, 3, 4, 5, -5, -2)$\\
\hline
$20$ & $(0, 1, 2, 3, 4, 7, -2, 1)$ & $42$ & $(0, 1, 2, 3, 4, 6, -3, -3)$\\
\hline
$21$ & $(0, 1, 2, 3, 4, 5, -5, 2)$ & $43$ & $(0, 1, 2, 3, 4, 5, 2, -5)$\\
\hline
$22$ & $(-\frac{1}{2}, \frac{3}{2}, \frac{5}{2}, \frac{7}{2}, \frac{9}{2}, \frac{13}{2}, -\frac{1}{2}, \frac{1}{2})$ & $44$ & $(0, 1, 2, 3, 4, 5, -2, -5)$\\
\hline
\end{tabular}
\bigskip
\caption{}
\label{extb1}
\end{table}

Table \ref{extb1} shows that the system $(\Phi, \Phi_I, \Phi_J)$ contains $44$ simple highest weight modules. Now we explore the relation between them. Write $c_{i, j}=c(\lambda^i, \lambda^j)$. It was proved in \cite{XZ} that $|c_{i, j}|\leq 1$ in this case. All the nonzero Jantzen coefficients are given in Table \ref{extb2}.

\renewcommand\arraystretch{1.3}
\begin{table}[htbp]
\begin{tabular}{|l|c|c|l|c|c|}
\hline
$i$ & $\{j\mid c_{i, j}=1\}$ & $\{j\mid c_{i, j}=-1\}$ & $i$ & $\{j\mid c_{i, j}=1\}$ & $\{j\mid c_{i, j}=-1\}$\\
\hline
$1$ & $2, 4, 14, 29, 41, 43$ & $8, 20, 32$ & $23$ & $33, 34, 40$ & $37, 44$\\
\hline
$2$ & $7, 9, 16, 24, 44$ & $8, 13, 19$ & $24$ & $41$ & $-$\\
\hline
$3$ & $6, 10, 17, 30, 35, 42$ & $15, 26, 39$ & $25$ & $26, 27, 35$ & $31$\\
\hline
$4$ & $7, 8, 20, 32, 38$ & $14, 29, 41$ & $26$ & $30, 42$ & $39$\\
\hline
$5$ & $11, 21, 23, 24, 37, 44$ & $16, 33, 40$ & $27$ & $30, 31$ & $35$\\
\hline
$6$ & $12, 15, 26, 31, 39$ & $17, 30, 42$ & $28$ & $29, 32, 36, 38$ & $34, 37, 43$\\
\hline
$7$ & $8, 13, 19, 40$ & $9, 16, 24$ & $29$ & $32, 34, 37, 43$ & $36, 38, 41$\\
\hline
$8$ & $9, 14, 16, 24, 29, 37, 41$ & $13, 19, 20, 32$ & $30$ & $39$ & $42$\\
\hline
$9$ & $13, 14, 19, 33$ & $16, 24$ & $31$ & $35, 39$ & $-$\\
\hline
$10$ & $12, 22, 27, 35, 42$ & $18, 25, 31$ & $32$ & $41$ & $-$\\
\hline
$11$ & $16, 19, 28, 33, 40$ & $23, 37, 44$ & $33$ & $36, 37, 44$ & $40$\\
\hline
$12$ & $18, 25, 31, 39$ & $22, 27, 35$ & $34$ & $36, 38$ & $37, 43$\\
\hline
$13$ & $16, 20, 23, 24$ & $19$ & $35$ & $42$ & $-$\\
\hline
$14$ & $20, 32, 36$ & $29, 41$ & $36$ & $37, 43$ & $38$\\
\hline
$15$ & $17, 18, 27, 30, 42$ & $26, 39$ & $37$ & $38, 40$ & $43, 44$\\
\hline
$16$ & $19, 23, 29, 37, 44$ & $24, 33, 40$ & $38$ & $40, 43$ & $-$\\
\hline
$17$ & $22, 25, 26, 39$ & $30, 42$ & $39$ & $42$ & $-$\\
\hline
$18$ & $22, 27, 30, 35$ & $25, 31$ & $40$ & $44$ & $-$\\
\hline
$19$ & $24, 32$ & $-$ & $41$ & $-$ & $-$\\
\hline
$20$ & $29, 34, 41$ & $32$ & $42$ & $-$ & $-$\\
\hline
$21$ & $28, 34, 37, 41, 43$ & $29, 36, 38$ & $43$ & $44$ & $-$\\
\hline
$22$ & $25, 26, 31$ & $27, 35$ & $44$ & $-$ & $-$\\
\hline
\end{tabular}
\bigskip
\caption{}
\label{extb2}
\end{table}

Suppose that $c_{i, j}\neq0$. If there exists no sequence $i=i_0<i_1<\cdots<i_k=j$ such that $c(i_{t-1}, i_{t})\neq0$ for $1\leq t\leq k$ and $k>1$, we say $\lambda^i$ and $\lambda^j$ are \emph{adjacent}. In this case, it was shown in Lemma \ref{jflem1} that $L(\lambda^j)$ is a subquotient of $M_I(\lambda^i)$. We connect $i$ and $j$ when $\lambda^i$ and $\lambda^j$ are adjacent. This gives us the poset in Figure \ref{exfg1}. Two simple modules $L(\lambda^i)$ and $L(\lambda^j)$ belongs to the same block if and only if $i$ and $j$ are connected in the poset. The poset exposes the block decomposition of the system $(\Phi, \Phi_I, \Phi_J)$. The system decomposes into two blocks. One block contains $28$ simple modules and the other contains $16$. It can be easily found in Table \ref{extb2} and Figure \ref{exfg1} that the system has three simple generalized Verma modules $M_I(\lambda^{41})$, $M_I(\lambda^{42})$ and $M_I(\lambda^{44})$. For $i=1, 3, 5$, the simple module $L(\lambda^i)$ is not a composition factor of generalized Verma modules other than $M_I(\lambda^i)$.

\begin{figure}[htbp]\footnotesize
\setlength{\unitlength}{0.9mm}
\begin{center}
\begin{picture}(0,140)

\put(-35,5){\circle{5}} \put(45,5){\circle{5}}
\put(-45,15){\circle{5}} \put(-25,15){\circle{5}} \put(35,15){\circle{5}} \put(55,15){\circle{5}}
\put(-35,25){\circle{5}} \put(35,25){\circle{5}} \put(55,25){\circle{5}}
\put(-35,35){\circle{5}} \put(35,35){\circle{5}} \put(55,35){\circle{5}}
\put(-35,45){\circle{5}} \put(5,45){\circle{5}} \put(45,45){\circle{5}}
\put(-45,55){\circle{5}} \put(-25,55){\circle{5}} \put(-5,55){\circle{5}} \put(15,55){\circle{5}}
\put(45,55){\circle{5}}
\put(-35,65){\circle{5}} \put(-15,65){\circle{5}} \put(5,65){\circle{5}}
\put(35,65){\circle{5}} \put(55,65){\circle{5}}
\put(-45,75){\circle{5}} \put(-25,75){\circle{5}} \put(-5,75){\circle{5}}
\put(35,75){\circle{5}} \put(55,75){\circle{5}}
\put(-55,85){\circle{5}} \put(-35,85){\circle{5}} \put(-15,85){\circle{5}} \put(5,85){\circle{5}}
\put(35,85){\circle{5}} \put(55,85){\circle{5}}
\put(-45,95){\circle{5}} \put(-5,95){\circle{5}} \put(45,95){\circle{5}}
\put(-5,105){\circle{5}}
\put(-5,115){\circle{5}}
\put(-15,125){\circle{5}} \put(5,125){\circle{5}}
\put(-5,135){\circle{5}}

\put(-33.15,6.85){\line(1,1){6.3}} \put(-36.85,6.85){\line(-1,1){6.3}}
\put(-26.85,16.85){\line(-1,1){6.3}} \put(-43.15,16.85){\line(1,1){6.3}}
\put(-35,27.5){\line(0,1){5}}
\put(-35,37.5){\line(0,1){5}}
\put(-33.15,46.85){\line(1,1){6.3}} \put(-36.85,46.85){\line(-1,1){6.3}} \put(6.85,46.85){\line(1,1){6.3}}\put(3.15,46.85){\line(-1,1){6.3}}
\put(-43.15,56.85){\line(1,1){6.3}} \put(-23.15,56.85){\line(1,1){6.3}} \put(-26.85,56.85){\line(-1,1){6.3}} \put(-3.15,56.85){\line(1,1){6.3}}
\put(-6.85,56.85){\line(-1,1){6.3}} \put(13.15,56.85){\line(-1,1){6.3}}
\put(-17.38,65.85){\line(-3,1){25.1}} \put(2.62,65.85){\line(-3,1){25.1}}
\put(-13.15,66.85){\line(1,1){6.3}} \put(-16.85,66.85){\line(-1,1){6.3}}
\put(-33.15,66.85){\line(1,1){6.3}}
\put(-43.15,76.85){\line(1,1){6.3}} \put(-46.85,76.85){\line(-1,1){6.3}}
\put(-23.15,76.85){\line(1,1){6.3}} \put(-26.85,76.85){\line(-1,1){6.3}}
\put(-3.15,76.85){\line(1,1){6.3}} \put(-6.85,76.85){\line(-1,1){6.3}}
\put(-53.15,86.85){\line(1,1){6.3}} \put(-36.85,86.85){\line(-1,1){6.3}} \put(-13.15,86.85){\line(1,1){6.3}} \put(3.15,86.85){\line(-1,1){6.3}}
\put(-5,97.5){\line(0,1){5}}
\put(-5,107.5){\line(0,1){5}}
\put(-3.15,116.85){\line(1,1){6.3}} \put(-6.85,116.85){\line(-1,1){6.3}}
\put(-13.15,126.85){\line(1,1){6.3}} \put(3.15,126.85){\line(-1,1){6.3}}

\put(46.85,6.85){\line(1,1){6.3}} \put(43.15,6.85){\line(-1,1){6.3}}
\put(35,17.5){\line(0,1){5}} \put(55,17.5){\line(0,1){5}}
\put(35,27.5){\line(0,1){5}} \put(55,27.5){\line(0,1){5}}
\put(36.85,36.85){\line(1,1){6.3}} \put(53.15,36.85){\line(-1,1){6.3}}
\put(45,47.5){\line(0,1){5}}
\put(46.85,56.85){\line(1,1){6.3}} \put(43.15,56.85){\line(-1,1){6.3}}
\put(35,67.5){\line(0,1){5}} \put(55,67.5){\line(0,1){5}}
\put(35,77.5){\line(0,1){5}} \put(55,77.5){\line(0,1){5}}
\put(36.85,86.85){\line(1,1){6.3}} \put(53.15,86.85){\line(-1,1){6.3}}

\put(52.7,66.15){\line(-2,1){15.4}}
\put(52.7,76.15){\line(-2,1){15.4}}
\put(37.3,26.15){\line(2,1){15.4}}
\put(37.3,16.15){\line(2,1){15.4}}

\put(-36.7,4){$44$} \put(43.3,4){$42$}
\put(-46.7,14){$40$} \put(-26.7,14){$43$} \put(33.3,14){$39$} \put(53.3,14){$35$}
\put(-36.7,24){$38$} \put(33.3,24){$30$} \put(53.3,24){$31$}
\put(-36.7,34){$37$} \put(33.3,34){$26$} \put(53.3,34){$27$}
\put(-36.7,44){$36$} \put(3.3,44){$41$} \put(43.3,44){$25$}
\put(-46.7,54){$33$} \put(-26.7,54){$34$} \put(-6.7,54){$32$}  \put(13.3,54){$24$}
\put(43.3,54){$22$}
\put(-36.7,64){$23$} \put(-16.7,64){$29$} \put(3.3,64){$19$}
\put(33.3,64){$17$} \put(53.3,64){$18$}
\put(-46.7,74){$28$} \put(-26.7,74){$16$} \put(-6.7,74){$20$}
\put(33.3,74){$15$} \put(53.3,74){$12$}
\put(-56.7,84){$21$} \put(-36.7,84){$11$} \put(-16.7,84){$13$} \put(3.3,84){$14$}
\put(34.2,84){$6$} \put(53.3,84){$10$}
\put(-45.8,94){$5$} \put(-5.8,94){$9$} \put(44.2,94){$3$}
\put(-5.8,104){$8$}
\put(-5.8,114){$7$}
\put(-15.6,124.0){$2$} \put(4.2,124){$4$}
\put(-5.8,134){$1$}

\end{picture}
\end{center}
\caption{}
\label{exfg1}
\end{figure}

%
%
\section{Disconnected systems}
%
%
In this section, we will present the data of disconnected systems, that is, those systems which contains more than one block.

There are $418$ different pairs of $(I, J)$ such that the corresponding system is disconnected for type $E_8$, while there are $294$ pairs for type $E_7$ and $110$ pairs for type $E_6$. Of course we can not describe the full information associated with each pair as the example in the previous section. In view of Lemma \ref{jflem4} and Lemma \ref{jflem5}, the block decomposition of the system $(\Phi, \Phi_I, \Phi_J)$ has some invariant properties up to conjugate classes of $\Phi_I$ and $\Phi_J$. The disconnected systems are listed in Table \ref{dctb1}, Table \ref{dctb3} and Table \ref{dctb5} according to these conjugate classes. By duality, we only illustrated data for those pairs with $|I|\geq |J|$. If $|I|=|J|$ and $I\neq J$, we pick either one of the two pairs. The conjugate classes of $\Phi_I$ are listed in the first and the forth columns in each table, while the conjugate classes of $\Phi_J$ are listed in the second and the fifth columns. If a conjugate class $\Phi_I$ contains $k$ subsets $I$ with $k>1$, we write $\Phi_I(k)$ in the table. For example $D_4\times A_1(2)$ in Table \ref{dctb1} means that there are two subsets $I$ (which are $\{\alpha_2, \alpha_3, \alpha_4, \alpha_5, \alpha_7\}$ and $\{\alpha_2, \alpha_3, \alpha_4, \alpha_5, \alpha_8\}$) such that $\Phi_I$ are in the conjugate class represented by $D_4\times A_1$. The number of blocks and simple modules are given in the third and sixth columns. For example, if $\Phi_I$ and $\Phi_J$ are both conjugate to $D_4\times A_2$, the formula $2\times 12+20$ means that the corresponding systems have three blocks, two of them contain $12$ simple modules, while the other one has $20$ simple modules.

\renewcommand\arraystretch{1.3}
\begin{table}[H]\footnotesize
\begin{tabular}{|c|c|c|c|c|c|}
\hline
$\Phi_I$ & $\Phi_J$ & blocks & $\Phi_I$ & $\Phi_J$ & blocks \\
\hline
$E_7$ & $A_1^3(21)$ & $2\times1$ & $A_5(4)$ & $D_4\times A_1(2)$ & $72+201$\\
\hline
$D_7$ & $A_2\times A_1^3(8)$ & $2\times1$ & $A_5\times A_1(3)$ & $D_4$ & $96+144$\\
\hline
$A_7$ & $A_3\times A_2\times A_1(4)$ & $2\times1$ & $D_4\times A_2$ & $D_4$ & $2\times 192$\\
\hline
$E_6\times A_1$ & $A_3\times A_1(20)$ & $2\times1$ & $A_5(4)$ & $D_4$ & $192+504$\\
\hline
$D_5\times A_1(3)$ & $A_5(4)$ & $2\times1$ & $D_4\times A_1(2)$ & $D_4\times A_1(2)$ & $288+450$\\
\hline
$D_5\times A_2$ & $A_3^2(2)$ & $2\times 2$ & $D_4\times A_1(2)$ & $D_4$ & $576+1224$\\
\hline
$D_6$ & $A_3\times A_2(10)$ & $2\times 2$ & $D_4$ & $D_4$ & $1152+3366$\\
\hline
$D_6$ & $A_2^2\times A_1^2(2)$ & $2\times 4$ & $E_6\times A_1$ & $A_2^2\times A_1(8)$ & $3\times 1$\\
\hline
$E_6\times A_1$ & $A_2^2(8)$ & $2\times 6$ & $D_6$ & $A_3\times A^1_2(10)$ & $3\times 4$\\
\hline
$A_4\times A_3$ & $D_4\times A_2$ & $2\times 6$ & $D_4\times A_2$ & $A_5\times A_1(3)$ & $3\times 6$\\
\hline
$A_4\times A_3$ & $A_5(4)$ & $2\times 6$ & $D_4\times A_2$ & $D_4\times A_2$ & $2\times 12+20$\\
\hline
$E_6$ & $A_2^2(8)$ & $2\times 12$ & $A_5(4)$ & $A_5(4)$ & $2\times 12+20+36$\\
\hline
$D_6$ & $A_3\times A_1(20)$ & $16+28$ & $A_5\times A_1(3)$ & $A_5(4)$ & $5\times 6$\\
\hline
$D_4\times A_2$ & $A_5(4)$ & $12+37$ & $A_4\times A_3$ & $A_5\times A_1(3)$ & $6\times 1$\\
\hline
$A_5\times A_1(3)$ & $D_4\times A_1(2)$ & $36+60$ & $A_4\times A_3$ & $A_4\times A_3$ & $7\times 1$\\
\hline
$E_6$ & $A_3(7)$ & $65+80$ & $A_5\times A_1(3)$ & $A_5\times A_1(3)$ & $12\times 1$\\
\hline
$D_4\times A_2$ & $D_4\times A_1(2)$ & $72+96$ & & &\\
\hline
\end{tabular}
\bigskip
\caption{Disconnected systems of $E_8$}
\label{dctb1}
\end{table}

When $\Phi=E_8$, Table \ref{dctb1} shows that a disconnected system could contain $2$, $3$, $4$, $5$, $6$, $7$ or $12$ blocks. It has at most $4518$ simple modules (see $(E_8, D_4, D_4)$). We say a system is semisimple if each of its block has exactly one simple module.
$9$ of these conjugate pairs corresponding to disconnected semisimple systems, while $9$ conjugate pairs corresponding to connected semisimple systems. All of them are listed in Table \ref{dctb2}.

\renewcommand\arraystretch{1.3}
\begin{table}[H]\footnotesize
\begin{tabular}{|c|c|c|c|c|c|}
\hline
$\Phi_I$ & $\Phi_J$ & blocks & $\Phi_I$ & $\Phi_J$ & blocks \\
\hline
$E_8$ & $\emptyset$ & $1$ & $E_7$ & $A_1^3(21)$ & $2$\\
\hline
$E_7$ & $A_2(7)$ & $1$ & $D_7$ & $A_2\times A_1^3(8)$ & $2$\\
\hline
$D_7$ & $A_2^2(8)$ & $1$ & $A_7$ & $A_3\times A_2\times A_1(4)$ & $2$\\
\hline
$A_6\times A_1$ & $A_4\times A_2\times A_1$ & $1$ & $E_6\times A_1$ & $A_3\times A_1(20)$ & $2$\\
\hline
$D_5\times A_2$ & $A_4\times A_2(4)$ & $1$ & $D_5\times A_1(3)$ & $A_5(4)$ & $2$\\
\hline
$E_6$ & $D_4$ & $1$ & $E_6\times A_1$ & $A_2^2\times A_1(8)$ & $3$\\
\hline
$D_6$ & $A_4(6)$ & $1$ & $A_4\times A_3$ & $A_5\times A_1(3)$ & $6$\\
\hline
$A_6(3)$ & $D_4\times A_2$ & $1$ & $A_4\times A_3$ & $A_4\times A_3$ & $7$\\
\hline
$D_5(2)$ & $D_5(2)$ & $1$ & $A_5\times A_1(3)$ & $A_5\times A_1(3)$ & $12$\\
\hline
\end{tabular}
\bigskip
\caption{Semisimple systems of $E_8$}
\label{dctb2}
\end{table}

When $\Phi=E_7$, Table \ref{dctb3} shows that a disconnected system could contain $2$ or $3$ blocks. It has at most $150$ simple modules (see $(E_7, D_4, A_3)$).
$9$ of these conjugate pairs correspond to disconnected semisimple systems, while $5$ conjugate pairs correspond to connected semisimple systems. All of them are listed in Table \ref{dctb4}.

\renewcommand\arraystretch{1.3}
\begin{table}[H]\footnotesize
\begin{tabular}{|c|c|c|c|c|c|}
\hline
$\Phi_I$ & $\Phi_J$ & blocks &  $\Phi_I$ & $\Phi_J$ & blocks \\
\hline
$D_6$ & $(A_1^3)'(10)$ & $2\times1$ & $D_4\times A_1$ & $(A_3\times A_1)'(9)$ & $8+12$\\
\hline
$A_5\times A_1$ & $(A_3\times A_1)'(9)$ & $2\times1$ & $A_4(5)$ & $A_4(5)$ & $2\times 10$\\
\hline
$(A_5)'(2)$ & $A_3\times A_1^2(3)$ & $2\times1$ & $D_4$ & $(A_3\times A_1)'(9)$ & $2\times 24$\\
\hline
$A_4\times A_1(5)$ & $A_4\times A_1(5)$ & $2\times1$ & $D_4\times A_1$ & $A_3(6)$ & $24+33$\\
\hline
$(A_5)'(2)$ & $A_2^2\times A_1(3)$ & $2\times2$ & $D_4$ & $A_3(6)$ & $48+102$\\
\hline
$A_3\times A_2(3)$ & $D_4\times A_1$ & $2\times 2$ & $A_5\times A_1$ & $A_2^2\times A_1(3)$ & $3\times 1$\\
\hline
$A_4\times A_1(5)$ & $A_4(5)$ & $2\times 3$ & $(A_5)'(2)$ & $(A_3\times A_1)'(9)$ & $3\times 2$\\
\hline
$(A_5)'(2)$ & $A_3(6)$ & $8+12$ & & &\\
\hline
\end{tabular}
\bigskip
\caption{Disconnected systems of $E_7$}
\label{dctb3}
\end{table}

\renewcommand\arraystretch{1.3}
\begin{table}[H]\footnotesize
\begin{tabular}{|c|c|c|c|c|c|}
\hline
$\Phi_I$ & $\Phi_J$ & blocks &  $\Phi_I$ & $\Phi_J$ & blocks \\
\hline
$E_7$ & $\emptyset$ & $1$ & $D_4\times A_1$ & $A_4(5)$ & $1$\\
\hline
$E_6$ & $(A_1^3)''$ & $1$ & $A_4\times A_2$ & $A_3\times A_2\times A_1$ & $1$\\
\hline
$D_6$ & $A_2(6)$ & $1$ & $D_6$ & $(A_1^3)'(10)$ & $2$\\
\hline
$A_6$ & $A_2\times A_1^3$ & $1$ & $A_5\times A_1$ & $(A_3\times A_1)'(9)$ & $2$\\
\hline
$D_5\times A_1$ & $A_2^2(4)$ & $1$ & $(A_5)'(2)$ & $A_3\times A_1^2(3)$ & $2$\\
\hline
$D_5(2)$ & $(A_3\times A_1)''(2)$ & $1$ & $A_4\times A_1(5)$ & $A_4\times A_1(5)$ & $2$\\
\hline
$(A_5)''$ & $D_4$ & $1$ & $A_5\times A_1$ & $A_2^2\times A_1(3)$ & $3$\\
\hline
\end{tabular}
\bigskip
\caption{Semisimple systems of $E_7$}
\label{dctb4}
\end{table}

When $\Phi=E_6$, Table \ref{dctb3} shows that a disconnected system could contain $2$ or $3$ blocks. It has at most $25$ simple modules (see $(E_6, A_3, A_3)$).
$6$ of these conjugate pairs correspond to disconnected semisimple systems, while $4$ conjugate pairs correspond to connected sesimple systems. All of them are listed in Table \ref{dctb4}.

\renewcommand\arraystretch{1.3}
\begin{table}[H]\footnotesize
\begin{tabular}{|c|c|c|c|c|c|}
\hline
$\Phi_I$ & $\Phi_J$ & blocks &  $\Phi_I$ & $\Phi_J$ & blocks \\
\hline
$A_5$ & $A_1^3(5)$ & $2\times1$ & $A_3(5)$ & $A_3(5)$ & $8+17$\\
\hline
$A_2^2\times A_1$ & $A_3\times A_1(4)$ & $2\times1$ & $A_2^2\times A_1$ & $A_2^2\times A_1$ & $3\times 1$\\
\hline
$A_3\times A_1(4)$ & $A_3(5)$ & $2\times4$ & $A_3\times A_1(4)$ & $A_3\times A_1(4)$ & $3\times 1$\\
\hline
$D_4$ & $A_2(5)$ & $2\times6$ & & &\\
\hline
\end{tabular}
\bigskip
\caption{Disconnected systems of $E_6$}
\label{dctb5}
\end{table}

\renewcommand\arraystretch{1.3}
\begin{table}[H]\footnotesize
\begin{tabular}{|c|c|c|c|c|c|}
\hline
$\Phi_I$ & $\Phi_J$ & blocks &  $\Phi_I$ & $\Phi_J$ & blocks \\
\hline
$E_6$ & $\emptyset$ & $1$ & $A_4(4)$ & $A_3(5)$ & $1$\\
\hline
$D_5(2)$ & $A_1^2(10)$ & $1$ & $A_5$ & $A_1^3(5)$ & $2$\\
\hline
$A_5$ & $A_2(5)$ & $1$ & $A_2^2\times A_1$ & $A_3\times A_1(4)$ & $2$\\
\hline
$A_4\times A_1(2)$ & $A_2\times A_1^2(5)$ & $1$ & $A_2^2\times A_1$ & $A_2^2\times A_1$ & $3$\\
\hline
$D_4$ & $A_2^2$ & $1$ & $A_3\times A_1(4)$ & $A_3\times A_1(4)$ & $3$\\
\hline
\end{tabular}
\bigskip
\caption{Semisimple systems of $E_6$}
\label{dctb6}
\end{table}

\end{document}